\newtheorem{conjecture}{Conjecture}
\definecolor{orange}{rgb}{1,0.5,0}
\DeclareMathAlphabet{\mathpzc}{OT1}{pzc}{L}{it} 
\theoremstyle{definition}
\newtheorem{definition}{Definition}[section]
\newtheorem{theorem}[definition]{Theorem}
\newtheorem{proposition}[definition]{Proposition}
\newtheorem{lemma}[definition]{Lemma}
\newtheorem{remark}[definition]{Remark}
\def\Re{\mathrm{Re\,}}
\def\geq{\geqslant}
\def\leq{\leqslant}
\def\R{\mathbb{R}}
\def\Z{\mathbb{Z}}
\def\N{\mathbb{N}}
\def\epsilon{\varepsilon}
\newcommand{\beq}{\begin{equation}}
\newcommand{\eeq}{\end{equation}}
\newcommand{\bea}{\begin{eqnarray}}
  \newcommand{\eea}{\end{eqnarray}}
  \newcommand{\beab}{\begin{eqnarray*}}
  \newcommand{\eeab}{\end{eqnarray*}}
  \newcommand{\be}{\begin{equation}}
  \newcommand{\ee}{\end{equation}}
\title{Density of orbits of horocycle flows at sub-quadratic polynomial times}
\author{Adam Kanigowski and Maksym Radziwi\l\l}
\begin{document}
\maketitle
\begin{abstract}Let $\Gamma\subset PSL(2,\R)$ be such that the space $X=\Gamma\slash PSL(2,\R)$ is not compact. Let 
$(h_t)$ be the horocycle flow acting on $X$. We show that for every $x\in X$ that is not periodic for $(h_t)$ and for every $\delta\in (0,1)$ the orbit $\{h_{n^{2-\delta}}x\}_{n\in \N}$ is dense in $X$. Assuming additionally the Hardy-Littlewood conjecture we show that for every non-periodic $x\in X$, $\{h_{p}x\}_{p-  \text{prime}}$ is dense in $X$. Finally we show that  for $\Gamma=PSL(2,\Z)$, $\{h_{n^2}y_q\}_{n<q}$ equidistribute, as $q\to \infty$ along primes congruent to $1 \pmod{4}$, towards Haar measure, where $\{y_q\}$ is a sequence of periodic points of period $q$.
\end{abstract}
\section{Introduction}
The paper is concerned with studying the orbits of horocycle flows at polynomial times. Recall that horocycle flows acting on quotients of $SL(2,\R)$ are a fundamental object in ergodic theory, dynamics and number theory. H.\ Furstenberg \cite{Fur} showed that if a lattice $\Gamma\subset SL(2,\R)$ is cocompact, then the horocycle flow on $X=\Gamma\slash SL(2,\R)$ is uniquely ergodic. If the lattice $\Gamma$ is not cocompact then the flow is not uniquely ergodic (due to presence of periodic orbits) but a full classification of invariant measures was given by Dani \cite{Dani} and Dani-Smillie, \cite{DS}. It follows that the only invariant measures are either the Haar measure or a measure supported on a periodic orbit. Horocycle flows enjoy quantitative ergodicity, i.e. one gets (polynomial) rates of deviation of ergodic averages as shown in the work of Flaminio-Forni, \cite{Flaminio-Forni} and Str\"ombergsson \cite{Strombergsson}. Horocycle flows are  polynomially mixing \cite{Howe,Moore} and in fact have countable Lebesgue spectrum, \cite{Par}. Due to the algebraic nature (rotation on a Lie group) it is natural to study horocycle flows at sparse times. Indeed, Margulis and Shah conjectured that the orbits of horocycle flows equidistribute when sampled at polynomial (or prime) times. This conjecture is still widely open in full generality although some progress towards it was made in recent years. Venkatesh, \cite{Ven}, showed that horocycle flows equidsitribute when sampled at times $\{n^{1+\eta}\}$ for some small $\eta=\eta_\Gamma>0$. Some improvements to Venkatesh's result were obtained in \cite{FFT} where the authors showed some uniform bounds on $\eta$ (but still could get up to  $\eta<1/12$). This general approach is based on studying so called twisted integrals using quantitative mixing and quantitative ergodicity. Sarnak and Ubis \cite{SU}, studied the orbits of horocycle flows at prime times and proved that prime orbits are not to concentrated -- they enter every set of sufficiently large diameter (see also \cite{Stre} for more general lattices). Generalizing  their approach, McAdam \cite{McAdam} showed that the orbits of horocycle flows become dense when sampled at  $k$-primes (integers having at most $k$ prime factors). More recently in \cite{FKR} the authors showed that the orbits of horocycle flows equidistribute when sampled at $2$-primes (for arithmetic lattices). Their methods used a recent breaktrough in \cite{LMW} where the authors obtained quantitative ergodicity for the square of the horocycle flow.  

In this paper we study orbits of horocycle flows at times $\{n^{2-\delta}\}$ for $\delta>0$. As mentioned above the method of Venkatesh does not work for times $\{n^{1+\eta}\}$, with $\eta>1/10$. The main result of this paper (see Theorem \ref{thm:main}) is that for non-cocompact lattices and for any non-periodic point $x\in X$ the orbit $\{h_{n^{2-\delta}} x\}$ is dense in $X$. We use a very different approach based on approximating horocycle orbits by periodic orbits on short intervals (we provide more details in the outline below). This in spirit is more similar to \cite{SU}, however we need a finer approximation but on a shorter interval of times. In particular our method seems to be restricted to the non-compact case, as otherwise there are no periodic orbits. Using  similar methods we can also show that prime orbits are dense conditionally on the Hardy-Littlewood conjecture. Finally recall that by a result of Sarnak, \cite{Sarnak}, the sequence of probability measures on periodic orbits whose lengths are tending to $\infty$ converges to the Haar measure. We show an analogous result for squares, i.e. we show that  $\{h_{n^2}y_q\}_{n<q}$ equidistribute, as $q\to \infty$, towards Haar measure, where $\{y_q\}$ is a sequence of periodic points of period $q$. This uses the spectral theory of automorphic forms to reduce the problem to subconvex estimates for $L(s, f \otimes \chi_d)$ and $L(s, \chi_d)$, due to respectively Iwaniec and Burgess. 

\section{Main Results}
Let $\Gamma\subset PSL(2,\R)$ be such that the space $X=\Gamma\slash PSL(2,\R)$ is not compact. Consider the horocycle flow $h_t=\begin{pmatrix}1&t\\0&1 \end{pmatrix}$ given by 
\begin{equation}\label{eq:hor}
h_t(\Gamma x)=\Gamma xh_t.
\end{equation}
Our main result is the following: 
\begin{theorem}\label{thm:main} For every $\delta\in (0,1)$  and every non-periodic $x\in X$, the orbit $\{h_{n^{2-\delta}}x\}$ is dense in $X$.
\end{theorem}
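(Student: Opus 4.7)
The plan is to shadow the orbit $\{h_tx\}$ on a short time-interval by a closed horocycle $\mathcal{O}_q\subset X$ of period $q$, and thereby reduce density of the subsequence $\{h_{n^{2-\delta}}x\}_n$ to Weyl-type equidistribution of $\{n^{2-\delta}\bmod q\}$ on $\mathcal{O}_q$.

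Fix a non-periodic $x\in X$, $\delta\in(0,1)$, and a small open ball $B\subset X$; the goal is to produce arbitrarily large $n\in\N$ with $h_{n^{2-\delta}}x\in B$. For each large $T$, the first step would be to realize (a) a closed horocycle $\mathcal{O}_q\subset X$ of period $q=q(T)$ passing within $\mathrm{diam}(B)/2$ of the centre of $B$, and (b) a time $T_0\in[T,2T]$ and a window length $L=L(T)$ such that $\{h_tx:t\in[T_0,T_0+L]\}$ remains within distance $\ll\mathrm{diam}(B)$ of $\mathcal{O}_q$. For (a) one leverages Sarnak's equidistribution theorem for long closed horocycles (and its quantitative refinements), which guarantees a suitable $\mathcal{O}_q$ for $q$ in a sufficiently wide range. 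Part (b) is the geometric heart of the argument: it uses effective quantitative equidistribution for the continuous horocycle flow through $x$ (in the spirit of Flaminio-Forni and Str\"ombergsson) together with non-divergence estimates (\`a la Margulis and Dani). The method's restriction to the non-compact case is transparent here: in the absence of closed horocycles there is nothing to shadow.

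Granting the shadowing and identifying $\mathcal{O}_q$ with $\R/q\Z$, the images $h_{n^{2-\delta}}x$ for integers $n$ with $n^{2-\delta}\in[T_0,T_0+L]$ correspond, up to the shadowing error, to the residues $\{n^{2-\delta}\bmod q\}$; there are $\asymp L/N^{1-\delta}$ such $n$, with $N=T^{1/(2-\delta)}$. Since $2-\delta\notin\Z$, Van der Corput / Weyl bounds for the exponential sums $\sum_n e(mn^{2-\delta}/q)$ together with the Erd\H{o}s-Tur\'an inequality yield equidistribution of these residues at a scale $\ll\mathrm{diam}(B)/q$ on $\R/q\Z$, provided $L$ is chosen sufficiently large relative to $q$ and $N$. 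Combining the Weyl step with the shadowing then produces the desired integer $n$ with $h_{n^{2-\delta}}x\in B$.

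The main obstacle will be the delicate simultaneous calibration of the three scales $q$, $L$, and $T$: the shadowing error (decreasing in $L$ via the effective rate) must sit below $\mathrm{diam}(B)$; the Weyl bound must actually deliver equidistribution at the scale $\mathrm{diam}(B)/q$ (placing a lower bound on $L/N^{1-\delta}$ that grows in $q$); and $\mathcal{O}_q$ must be arrangeable to meet $B$ using Sarnak-type results (placing a lower bound on $q$). I expect the tightest constraint to be the effective shadowing rate along the single orbit of $x$, which is also the reason the method presumably stops short of the Venkatesh regime $\delta=0$; verifying that a feasible choice $q=T^\gamma$, $L=T^\lambda$ exists for every $\delta\in(0,1)$ will be the technical heart of the proof.
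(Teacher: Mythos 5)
There is a genuine gap at the step where you pass from shadowing to the residues $\{n^{2-\delta}\bmod q\}$. Your reduction presumes an \emph{unsheared} approximation, i.e.\ that $h_t x$ stays close to $h_t w$ for a point $w$ on the closed horocycle throughout the window $[T_0,T_0+L]$. But to catch even one integer $n$ with $n^{2-\delta}\in[T_0,T_0+L]$ one needs $L\gtrsim T^{\frac{1-\delta}{2-\delta}}$ (nearly $T^{1/2}$ for small $\delta$), whereas within time $T$ the orbit of $x$ generically approaches a fixed periodic orbit only to transverse distance $\eta\approx T^{-1/2}$: writing $\gamma x h_{t_0}=w\begin{pmatrix}a&0\\ c&a^{-1}\end{pmatrix}$ with $|a-1|,|c|\approx\eta$, conjugating by $h_t$ produces a time drift of size $\approx|c|t^2$, so genuine shadowing is lost after time $\approx\eta^{-1/2}\approx T^{1/4}$. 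Since $1/4<\frac{1-\delta}{2-\delta}$ for all $\delta<2/3$, the three-scale calibration you flag as the ``main obstacle'' is not merely delicate, it is infeasible in that range: no choice $q=T^\gamma$, $L=T^\lambda$ makes your scheme close. The paper's way around this is precisely the missing idea: a \emph{sheared} approximation (Lemma \ref{lem:per}) valid on a window of length $T^{1/2-\delta/5}$, namely $h_tx\approx h_{m_{a,c}(t-t_i)}p$ with the M\"obius time change $m_{a,c}(t)=at/(a^{-1}+ct)$, whose drift $\approx ct^2$ is comparable to the window itself. Consequently the phases sampled on the periodic orbit are $m_{a,c}(n^{2-\delta}-t')/R$, not $n^{2-\delta}/q$; the paper Taylor-expands $m_{a,c}$ into a polynomial of degree $\approx 5/\delta$ (Lemma \ref{lem:pol}), linearizes $n^{2-\delta}$ on the very short interval of relevant $n$ (of length a small power $T^{\frac{\delta+2\delta^2}{10(2-\delta)}}$), and applies the quantitative Weyl inequality for polynomial phases (Lemma \ref{lem:Weyl}), the technical heart being the verification \eqref{eq:gold} that some intermediate coefficient lies in the range where Weyl gives cancellation. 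Note in particular that on that short interval $n^{2-\delta}$ is effectively linear, so the equidistribution does not come from $2-\delta\notin\Z$ and van der Corput at all, contrary to your Weyl step; it comes from the nonlinearity of the time change.

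Two smaller points. The effective equidistribution along the single orbit of $x$ cannot be invoked at arbitrary times $T$: it is useful only when $g_{\log T}x$ lies in a fixed compact set, and this is exactly where non-periodicity of $x$ enters (Ratner's criterion that non-periodic points have recurrent geodesic orbits); the paper therefore works along the sequence of recurrence times, which is enough for density. Your use of Sarnak's theorem is consistent with the paper, which fixes a periodic orbit of period $R$, proves equidistribution of the sampled points toward the periodic measure $\mu_p$, and only at the end lets $R\to\infty$ to upgrade to density in $X$.
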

We also study density of horocycle orbits when sampled at prime times. Assuming the Hardy-Littlewood conjecture we show the following:
\begin{theorem}\label{thm:main'} Assume the Hardy-Littlewood conjecture (specifically the conjecture denoted by $\mathcal{C}_{10^6}(N^{1/50}, N)$ in Section \ref{se:numbertheory}). Then for every non-periodic $x\in X$, the orbit $\{h_px\}_{p- \text{prime}}$ is dense in $X$.
\end{theorem}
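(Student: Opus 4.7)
The plan is to take over the architecture of the proof of Theorem \ref{thm:main} wholesale, swapping out only the final arithmetic step: where the polynomial case uses Weyl-type equidistribution of $\{n^{2-\delta}\bmod L\}$ to locate a good index $n$, the prime case will use the hypothesis $\cC_{10^6}(N^{1/50},N)$ to locate a prime in a short window that lies in a prescribed residue class modulo $L$.

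Concretely, I would first distill from the proof of Theorem \ref{thm:main} a dynamical ``short-interval approximation lemma'' of roughly the following shape: given a non-periodic $x\in X$, an open set $U\subset X$, and a precision $\eps>0$, there exist arbitrarily large scales $N$, a time $T\in[N,2N]$, a point $z$ lying on a closed horocycle of period $L$, and a window length $W\leq N^{1/50}$, such that
\[
d(h_{T+s}x,\, h_{s}z) < \eps \qquad \text{for every } s\in[0,W],
\]
and such that the approximating arc $\{h_s z : s\in[0,W]\}$ passes within $\eps$ of $U$. The existence of such data at infinitely many scales $N$ is provided by the cusp excursions of the non-periodic orbit together with the effective equidistribution of closed horocycles deep in the cusp (Sarnak's theorem), both of which are already needed for Theorem \ref{thm:main}. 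Once the lemma is in hand, producing a prime $p$ with $h_p x\in U$ is reduced to finding a prime $p\in[T,T+W]$ with $p\equiv T+s^\ast\pmod{L}$, where $s^\ast\in[0,W]$ is any point with $h_{s^\ast}z\in U$.

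The arithmetic input $\cC_{10^6}(N^{1/50},N)$, namely a Hardy-Littlewood prime $k$-tuple conjecture for $k$ as large as $10^6$ with admissible shifts of size at most $N^{1/50}$, should yield---via a Selberg-sieve transfer in the spirit of Maynard and Tao---the expected asymptotic count of primes in an interval of length $N^{1/50}$ along any admissible residue class modulo $L$, provided $L$ is bounded by a suitable small power of $N^{1/50}$. The main obstacle, and the reason the precise exponents $1/50$ and $10^6$ appear in the hypothesis, is the simultaneous calibration of three coupled constraints: (i) the horocycle shear error, of order $W/L$, must be $\leq \eps$, which forces $L\gtrsim W/\eps$; (ii) the effective equidistribution rate of a closed horocycle of period $L$ must be fine enough that a sub-arc of length $W$ is already $\eps$-dense in $X$, coupling $\eps$, $W$, $L$ through the spectral gap; and (iii) $W$ and $L$ must lie in the regime where $\cC_{10^6}(N^{1/50},N)$ delivers a positive count of primes in an arithmetic progression modulo $L$ inside an interval of length $W$. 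Threading the first two constraints is what Theorem \ref{thm:main} essentially does; threading them together with the third, while keeping $W\leq N^{1/50}$, is the delicate quantitative balance that the proof must make explicit, and it is what dictates the shape of the conjecture quoted in the hypothesis.
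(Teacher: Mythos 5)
The central step of your plan --- extracting from $\mathcal{C}_{10^6}(N^{1/50},N)$ a prime in a \emph{single prescribed} window $[T,T+W]$, $W\leq N^{1/50}$, lying in a prescribed residue class modulo $L$ --- does not work, and it is not how the paper uses the hypothesis. The conjecture as stated carries an error term $O_{K,\varepsilon}(N^{1/2+\varepsilon})$, which swamps the expected main term $\asymp W/\log N$ of any individual interval of length $N^{1/50}$, so it gives no information about any particular window; and a Maynard--Tao/Selberg-sieve transfer cannot repair this, since such arguments produce primes for \emph{some} $n$ in a long range, never in a window you have fixed in advance by dynamical considerations. The paper's actual mechanism is an almost-all statement paired with an abundance of dynamical windows: from the Hardy--Littlewood hypothesis it deduces, via Montgomery--Soundararajan-type $2k$-th moment bounds (Section \ref{se:numbertheory}, culminating in Proposition \ref{thm:EH}), that for $H\sim T^{\alpha/30}$, $q\sim H^{1/4}$, the twisted sums $\sum_{n\in[x,x+H]}e(na/q)\Lambda(n)$ are $o(H)$ for all but $T^{1-\alpha/5-o(1)}$ starting points $x\in[0,T\log T]$ and most $a\bmod q$; on the dynamical side, Proposition \ref{prop:prime} produces a set $G$ of times $\bar t$ of measure $\gg TH^{-5}(\log T)^{-2}$ at which $h_{\bar t}x$ is $(H^{-1},H^{-2})$-close to the periodic point, so that $d(h_tx,h_{t-\bar t}w)<\eps$ on all of $[\bar t,\bar t+\eps^2H]$ by Lemma \ref{lem:prime}. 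Because $|G|$ exceeds the arithmetic exceptional set, some $\bar t\in G$ is arithmetically good. Your proposal has only one window and therefore nothing to intersect with the exceptional set; this ``many good times versus small exceptional set'' interplay is the missing idea without which the argument collapses.

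There are secondary mismatches as well. The period of a closed horocycle is a real number, so ``primes $\equiv T+s^\ast \pmod L$'' does not directly make sense; the paper instead \emph{chooses} the auxiliary periodic orbit to have rational period $R=q/a\sim \eps^{-10}$ with $q\sim H^{1/4}$ and $a$ selected by pigeonhole so that $a,2a,\dots,K(\eps)a$ are all good frequencies, and then reduces matters, after Fourier expansion of $f$ on the circle of circumference $R$, to the cancellation $\sum_{n\in[\bar t,\bar t+H]}e(nak/q)\Lambda(n)=o(H/K)$ --- equidistribution along the short periodic orbit, not hitting of one residue class. Consequently the approximating orbit need not be $\eps$-dense in $X$ at fixed $\eps$: its period stays bounded, and density of the full orbit follows by letting $R\to\infty$ (as $\eps\to 0$) and invoking Sarnak's theorem that periodic measures converge to Haar. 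Finally, your calibration (i), a shear error of size $W/L$ forcing $L\gtrsim W/\eps$, misdescribes the mechanism: the shear over the window is governed by the matrix entries $|a-1|<H^{-1}$, $|c|<3H^{-2}$ furnished by Proposition \ref{prop:prime}, not by the period of the closed horocycle, which is why an unsheared approximation on a window of length $\eps^2H$ is available in the prime case even though the polynomial case (Theorem \ref{thm:main}) must work with a sheared reparametrization $m_{a,c}$.
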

Finally we study distribution of the sequence of periodic orbits when sampled at the squares in the special case $\Gamma=PSL(2,\Z)$. We show the following:

\begin{theorem}\label{thm:main''}
Let $\{y_q\}_{q\in \N}$ be a sequence of $(h_t)$- periodic points of period $q$. We show that the sequence of measures 
$$
\mu_q:=\frac{1}{q}\sum_{n<q} \delta_{h_{n^2}(y_q)}
$$
converges to the Haar measure on $X=PSL(2,\Z)\slash PSL(2,\R)$ as $q\to \infty$ along primes congruent to $1 \pmod{4}$. 
\end{theorem}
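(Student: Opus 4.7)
\medskip

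\noindent\emph{Plan of proof of Theorem \ref{thm:main''}.}
The plan is to proceed via the spectral theory of automorphic forms. After decomposing test functions on $X=PSL(2,\Z)\backslash PSL(2,\R)$ into $K$-types and noting that the horocycle flow $h_t$ preserves the upward-pointing tangent direction along each closed horocycle, the orbit of period $q$ lifts to a copy of the closed horocycle at height $1/q$ on the modular surface $PSL(2,\Z)\backslash \H$, and the spectral decomposition reduces the task to showing
$$\int \phi\, d\mu_q \longrightarrow 0$$
for every Hecke--Maass cusp form $\phi$, and
$$\int E(\,\cdot\,, \tfrac12 + it)\, d\mu_q \longrightarrow 0$$
for every unitary Eisenstein series. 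The constant function contributes the correct main term $\int 1\, d\mu_{\mathrm{Haar}} = 1$, and tightness of $(\mu_q)$ (ruling out mass escape into the cusp) follows from the Eisenstein estimate applied to incomplete Eisenstein series.

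Parametrising, one lifts $y_q$ to the upward unit tangent vector at $i/q$ (any horizontal shift is absorbed into inessential phase factors), so that $h_{n^2}(y_q)$ lifts to the upward unit tangent vector at $n^2/q + i/q$. Substituting into the Fourier expansion
$$\phi(x+iy) = \sum_{m \neq 0}\rho_\phi(m)\sqrt{y}\, K_{it_\phi}(2\pi|m|y)\, e(mx)$$
and exchanging summations, the inner sum $\sum_{n<q} e(mn^2/q)$ is a Gauss sum: for $q$ prime with $q\equiv 1 \pmod 4$ it equals $\bigl(\tfrac{m}{q}\bigr)\sqrt{q}$ when $(m,q)=1$ and equals $q$ when $q\mid m$. (The hypothesis $q \equiv 1\pmod 4$ is used precisely to identify the Gauss sum with the Legendre symbol without an extra root-of-unity factor.) This yields
$$\int \phi\, d\mu_q = \frac{1}{q}\sum_{(m,q)=1}\rho_\phi(m)\,\chi_q(m)\, K_{it_\phi}\!\left(\frac{2\pi|m|}{q}\right) + O_\phi\!\left(q^{-1/2+\varepsilon}\right),$$
with $\chi_q = \bigl(\tfrac{\cdot}{q}\bigr)$, the error absorbing the $q\mid m$ terms via Bessel decay and polynomial bounds on $\rho_\phi$. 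The Eisenstein analogue is identical, with $\rho_\phi(m)$ replaced by the divisor-type coefficients $\sigma_{2it}(m)|m|^{-it}/\zeta(1+2it)$ plus an explicit zero-frequency contribution of size $q^{-1/2}$.

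The Bessel weight $K_{it_\phi}(2\pi|m|/q)$ is, up to rapidly decaying tails, a smooth truncation to $|m|\lesssim q(1+|t_\phi|)$, and by Mellin inversion the residual sum is, up to bounded factors, of the form $L(\tfrac12, \phi\otimes\chi_q)/\sqrt{q}$; this matches an approximate functional equation for the degree-$2$ twisted $L$-function of analytic conductor $\asymp q^2$. Iwaniec's conductor-aspect subconvexity bound $L(\tfrac12, \phi\otimes\chi_q)\ll_\phi q^{1/2 - \eta}$ then gives $\int \phi\, d\mu_q \ll_\phi q^{-\eta}\to 0$. For the continuous spectrum the same scheme produces $L(\tfrac12 + it, \chi_q) L(\tfrac12 - it, \chi_q)/\sqrt{q}$, and Burgess' bound $L(\tfrac12 + it, \chi_q)\ll_t q^{1/4 - \eta'}$ supplies the required decay. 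The main technical obstacle is preserving polynomial uniformity in the spectral parameter $t_\phi$ (and $t$) in these subconvex estimates so that the continuous-spectrum integral can be controlled term-by-term, together with carefully aligning the Mellin transform of the Bessel weight with the approximate functional equation so that only the conductor-aspect of subconvexity is needed.
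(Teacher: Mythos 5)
Your proposal follows essentially the same route as the paper's proof: spectral decomposition on the modular surface, evaluation of the quadratic Gauss sum $\sum_{n<q} e(mn^2/q)$ as $\chi_q(m)\sqrt{q}$ for prime $q\equiv 1\pmod 4$, and a Mellin/Bessel analysis reducing the cuspidal and Eisenstein contributions to Iwaniec's subconvexity for $L(\tfrac12+it, u_j\otimes\chi_q)$ and Burgess's bound for $L(\tfrac12+it,\chi_q)$, with the spectral truncation at $|t_j|, |t|\leq q^{\varepsilon}$ handling uniformity. The minor packaging differences (approximate functional equation language, explicit tightness remark, $K$-type reduction) do not change the argument.
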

\begin{remark}The proof of this result can be extended to other moduli, we focus on the case of $q \equiv 1 \pmod{4}$ as it captures the main ideas. 
\end{remark}

{\bf Outline of the proof:} The main idea behind Theorems \ref{thm:main} and \ref{thm:main'} is similar. The first key observation is that if $\{w_R\}_{R\in \N}$ is a  sequence of periodic points of period going to $\infty$ (and irrational), then $\{h_{n^{2-\delta}}w_R\}_{n\in \N}$ is $\epsilon$-dense provided that $R$ is large enough in terms of $\epsilon$. Using quantitative equidistribution results for horocyle flows we show that if $x$ is a non-periodic point then, for a sequence of times $\{T_i\}$ a long enough piece (of length $T_i^{1/2-o(1)}$) of the orbit $\{h_tx\}_{t<T_i}$ can be approximated by a periodic orbit $Orb(w)$ of a fixed period $R$, where $R$ is fixed at this stage, but in the end we let $R\to \infty$. The approximation on the interval $I$ of length $T_i^{1/2-o(1)}$  is a  sheared approximation, i.e. it is {\bf not} true that $h_tx$ is close to $h_tw$ for $t\in I$, rather there is a function $m(t)=m_{x,w}(t)$ such that $h_tx$ is close to $h_{m(t)}w$. This approximation, in rough terms, reduces the problem to studying equidistribution of $\{m(a_n)\alpha\}_{n\in I}$, where $(a_n)$ is either $n^{2-\delta}$ or $(a_n)$ is the characteristic function of primes and $\alpha=(per(w))^{-1}$. Note that this in general is a difficult problem (especially for primes) as the interval $I$ is of length $o(T^{1/2})$.  The main idea behind Theorem \ref{thm:main} is that one can approximate the function $m(\cdot)$ on $I$ by a (high degree) polynomial and then use Weyl equidistribution theorem on exponential sums at polynomial phases. The case of primes is much more difficult as the interval under consideration is to short in general. We use a different argument here. More precisely, by further splitting the interval $I$ into intervals $J$ of size $|I|^{\eta}$ we can indeed approximate $h_tx$ by $h_t(w_J)$ where $w_J\in Orb(w)$. Since $w$ is periodic, the $w_J$ will not matter in the end as it is just a constant shift in the phase. The extra idea is to average the problem along  possible periods of periodic points. More precisely assuming the Hardy-Littlewood conjecture we show that there exists $q\sim |J|^{1/4}$ such that for 'most' $a\sim q$,$a<q$, $(a,q)=1$, we have for $H=|J|$
\begin{equation}\label{eq:mh}
\Big|\sum_{n\in [M,M+H]} e(n\frac{a}{q})\Lambda(n)\Big|=o(H),
\end{equation}
for all but a set $T^{1/2-\epsilon}$ of $M\in [0,T]$. We then take the orbit $w$ to have period $q/a$. In particular, since $|I|\sim T^{1/2-o(1)}$ it follows that there is a $M\in I$, such that the interval $[M,M+H]$ satisfies \eqref{eq:mh}. We then use the fact that on $[M,M+H]$, $h_tx$ is close to $h_t(w_J)$, and $\{h_t(w_J)\}$ equdisitributes to the periodic measure $\mu_w$ due to \eqref{eq:mh}. Finally the periodic measure itself equidistributes towards Haar measure as the period goes to $\infty$. 

For the proof of Theorem \ref{thm:main''} we use the spectral theory of automorphic forms to reduce  the problem to averaging cusp forms and Eisenstein series for $\text{SL}_2(\mathbb{Z})$ over a series of points 
$$
\frac{n^2}{q} + \frac{i}{q}.
$$
We then expand the cusp forms and Eisenstein series into a Fourier series, the averaging over $n^2 / q$ then amounts to twisting this Fourier series by a quadratic Gauss sum. Since the Gauss sum can be directly evaluated in terms of quadratic characters, we reduce ourselves to the problem of bounding the Fourier expansion of a cusp form twisted by a quadratic character of conductor $q$. In the case of cusp forms, this is equivalent to obtaining a subconvex bound for $L(\tfrac 12 + it, f \otimes \chi_d)$ in the $d$-aspect, a result of Iwaniec, and for Eisenstein series this reduces to subconvexity for $L(s, \chi_d)$, a classical result of Burgess.

\section{Quantitative equidistribution for horocycle flows}

Let 
$$
h_t=\begin{pmatrix}1&t\\0&1 \end{pmatrix},\;\;\;\; g_s=\begin{pmatrix}e^{s/2}&0\\0&e^{-s/2} \end{pmatrix}, \;\;\;\;\; v_t=\begin{pmatrix}1&0\\t&1 \end{pmatrix}
$$
be the horocycle, geodesic and complementary horocycle flow respectively. The flows $(g_s)$ and $(v_t)$ also act on $X$ analogously to \eqref{eq:hor}.
 We recall the following renormalization relations
\begin{equation}\label{eq:norm}
h_t \cdot g_s=g_s\cdot  h_{e^{-s}t}\;\;\;\text{ and }\;\;\; v_t\cdot g_s=g_s\cdot v_{e^{s}t}, \;\; \text{for all}\;\;\; s,t\in \R.
\end{equation}

Let $\mu_{Haar}$ denote the Haar measure on $X$ and for $x\in X$ let $dist(x,e)$ denote the hyperbolic distance from $x$ to $e$.  The following quantitative equidistribution result was proven in Theorem 1.1. \cite{Strombergsson} (see also Theorem 5.14 in \cite{Flaminio-Forni}):
\begin{theorem}\label{thm:SFF} There exists $\alpha\in (-0,1)$ such that for every $f\in C^4(X)$ of compact support and for all $x\in X$, 
$$
\frac{1}{T}\int_0^T f(h_t(x))dt=\mu_{Haar}(f)+ {\rm O}(\|f\|_{W_4})r^{-\alpha}$$
where $\|f\|_{W_4}$ denotes the Sobolev norm and $r=r(x,T)=T\cdot e^{-dist(g_{\log T}(x),e)}$.
\end{theorem}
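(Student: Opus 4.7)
I would use the representation-theoretic / spectral approach of Flaminio--Forni, with uniformity in the base point $x$ extracted via the renormalization relations \eqref{eq:norm}. The setup is the decomposition of $L^2(X, \mu_{Haar})$ as a direct sum/integral of irreducible unitary representations of $PSL(2,\R)$: a trivial summand giving the main term $\mu_{Haar}(f)$, a discrete cuspidal spectrum (principal and complementary series indexed by Maass cusp forms on $X$), and one Eisenstein continuous spectrum per cusp of $\Gamma$. The Sobolev control $\|f\|_{W_4}$ is what allows absolute convergence of the later spectral sums (four derivatives suffice for the Casimir-weighted sums one gets).

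\textbf{Per-representation horocycle bound.} Within each non-trivial irreducible representation $\pi$ of Casimir parameter $s_\pi$ with $\Re s_\pi < 1$ (by unitarity together with a quantitative spectral gap, e.g.\ Selberg's $3/16$ or the weaker generic bound obtained by hand), one computes in an explicit induced model that
$$\int_0^T \pi(h_t) v \, dt \;=\; D_\pi^+(v)\, T^{s_\pi} \,+\, D_\pi^-(v) \,+\, O(\|v\|_{W_4}),$$
where $D_\pi^\pm$ are the two $h_t$-invariant distributions in $\pi$ classified by Flaminio--Forni. Pairing with a smooth bump approximating $\delta_x$ and summing over $\pi$ yields
$$\int_0^T f(h_t x)\, dt \;=\; T \, \mu_{Haar}(f) \,+\, O\bigl(\|f\|_{W_4}\, T^{1-\alpha}\, W(x,T)\bigr),$$
with $\alpha > 0$ depending on the spectral gap and $W(x,T)$ the Sobolev weight of the bump near $x$.

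\textbf{Cusp factor.} The weight $W(x,T)$ is controlled via the renormalization $h_{Tu} = g_{\log T}\, h_u\, g_{-\log T}$: the long horocycle piece through $x$ is the $g_{\log T}$-image of the unit-length horocycle piece through $y := g_{-\log T}x$. If $y$ is in the bulk of $X$ then $W = O(1)$, but if $y$ sits at hyperbolic distance $d := \text{dist}(g_{\log T}x, e)$ deep in a cusp, the smooth bump around $x$ pulls back under $g_{\log T}$ to something of scale $e^d$ in the $v_t$-direction, which inflates the invariant-distribution pairings by $e^{\alpha d}$. Dividing through by $T$ yields the advertised error $r^{-\alpha}$ with $r = T e^{-d}$.

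\textbf{Main obstacle.} The delicate step is the Eisenstein contribution: each cusp $\mathfrak{a}$ contributes a continuous family $E_\mathfrak{a}(\cdot, 1/2 + i\tau)$ of $h_t$-invariant distributions, and one must integrate over $\tau$ and sum over cusps while keeping the bound uniform in $x$. This requires the Fourier--Whittaker expansion of $E_\mathfrak{a}$ in each cusp (the constant term handled by elementary means, the oscillatory remainder by stationary phase and Cauchy--Schwarz against the spectral density of $f$) together with the functional equation of the scattering matrix. Matching the polynomial loss in $\tau$ against the Sobolev weights $\|f\|_{W_4}$ is what ultimately fixes the admissible $\alpha$, and this is essentially the heart of Strömbergsson's refinement of Flaminio--Forni Theorem 5.14.
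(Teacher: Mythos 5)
First, a point of comparison: the paper does not prove this statement at all. Theorem \ref{thm:SFF} is imported verbatim from Str\"ombergsson (Theorem 1.1 of \cite{Strombergsson}, with Theorem 5.14 of \cite{Flaminio-Forni} as an alternative reference), so there is no internal argument to measure your proposal against. Judged on its own, your outline correctly identifies the strategy of the cited proofs (spectral decomposition of $L^2(X)$, the trivial representation giving $\mu_{Haar}(f)$, cuspidal and Eisenstein pieces controlled with Sobolev weights, and the renormalization $h_{Tu}=g_{\log T}h_u g_{-\log T}$ as the mechanism producing the excursion-dependent quantity $r=Te^{-\dist(g_{\log T}x,e)}$), but it is a plan rather than a proof, and two of its steps have genuine gaps.

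The first gap is the ``pairing with a smooth bump approximating $\delta_x$.'' The ergodic integral $\int_0^T f(h_tx)\,dt$ is a pointwise quantity along a one-dimensional orbit arc; replacing $\delta_x$ by a three-dimensional bump and pairing it against the per-representation expansion is not harmless, because points near $x$ transverse to the orbit shear apart over time $T$ (the $v$-direction spreads by a factor $\sim T$), so the error committed is not controlled by the bump's Sobolev norm in any obvious way and the loss you would have to absorb is exactly of the size of the main term. Flaminio--Forni avoid this by proving a Sobolev trace theorem for the orbit arcs themselves (the invariant distributions are paired with the arc, not with a thickening), while Str\"ombergsson bypasses the invariant-distribution formalism altogether and bounds the spectral expansion of $f$ evaluated along the horocycle directly, using Fourier--Whittaker expansions, bounds on Maass-form coefficients and $K$-Bessel asymptotics. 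Your sketch needs one of these substitutes; as written the step fails. The second gap is that the uniformity in $x$ with the precise factor $r^{-\alpha}$, including the Eisenstein integral uniformly over the excursion height, is exactly the content of the theorem, and your outline defers it (``essentially the heart of Str\"ombergsson's refinement''), i.e. it assumes the hard part rather than proving it. Since in this paper the theorem is a black-box input, citing \cite{Strombergsson} is the appropriate ``proof''; if you do want a self-contained argument, the missing ingredients are the trace/Fourier-expansion estimates just described.
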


We have also an analogous result for the flow $(v_t)$ that we state in the remark below for future reference:
\begin{remark}\label{rem:SFF} Let $f\in C^4(X)$  be of compact support. Then for all $x\in X$, 
$$
\frac{1}{T}\int_0^T f(v_t(x))dt=\mu_{Haar}(f)+ {\rm O}(\|f\|_{W_4})(\bar{r}^{-\alpha}+T^{-\alpha})
$$
where $\bar{r}=\bar{r}(x,T)=T\cdot e^{-dist(g_{-\log T}(x),e)}$.
\end{remark}

\begin{lemma}\label{lem:perp} Let $p\in X$ be a periodic point for $(h_t)$ of period $R>1$ and $\delta\in (0,1)$. Then for every $T\geq T_{\delta,R}$ the following holds: there exists $c\in \R_+$, $c\sim  T^{-1/2+\delta/12}$ such that 
$$
g_{\log T}(v_c(p))\in B_X(e,1):=\{x\in X\;:\; d_X(x,e)\leq 1\}.
$$
\end{lemma}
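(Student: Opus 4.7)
The plan is to combine the commutation relations \eqref{eq:norm} with the quantitative equidistribution of the $(v_t)$-flow recorded in Remark \ref{rem:SFF}. The preliminary observation is that $v_c g_{\log T}=g_{\log T}v_{cT}$, which gives
\[
g_{\log T}(v_c(p))=v_{cT}(q),\qquad q:=g_{\log T}(p).
\]
Since the geodesic flow rescales $(h_t)$-periods by $e^{-s}$, the point $q$ is itself $(h_t)$-periodic of period $R/T$, and in particular lies deep in the same cusp of $X$ as $p$. Writing $s:=cT$, the lemma will reduce to producing some $s$ in an interval $[s_1,s_2]$ with $s_1\asymp s_2\asymp T^{1/2+\delta/12}$ (constants depending on $R,\delta$) such that $v_s(q)\in B_X(e,1)$; then $c=s/T\sim T^{-1/2+\delta/12}$ has the required order.

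To produce such an $s$, I would apply Remark \ref{rem:SFF} to the $(v_t)$-flow started at $v_{s_1}(q)$ with flow-time $\tau:=s_2-s_1\asymp T^{1/2+\delta/12}$, tested against a nonnegative $f\in C^4(X)$ supported in $B_X(e,1)$ with $\mu_{Haar}(f)>0$ (a standard smoothing of $\raz_{B_X(e,1/2)}$). The error parameter $\bar r=\tau\cdot e^{-d_X(g_{-\log\tau}(v_{s_1}(q)),\,e)}$ can be evaluated via \eqref{eq:norm}:
\[
g_{-\log\tau}(v_{s_1}(q))=v_{s_1/\tau}\!\bigl(g_{\log(T/\tau)}(p)\bigr),
\]
where $s_1/\tau=O(1)$ and $\log(T/\tau)=\bigl(\tfrac12-\tfrac{\delta}{12}\bigr)\log T$. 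Choosing an upper-triangular representative $g=\bigl(\begin{smallmatrix}a&b\\0&a^{-1}\end{smallmatrix}\bigr)$ of $p$ with $a^2=w/R$ ($w$ being the width of the cusp of $p$), a direct computation in $\H$ shows that this image has $y$-coordinate $\asymp_R T^{1/2-\delta/12}$, which for $T$ large computes the invariant height in $\Gamma\backslash\H$ up to $O(1)$; and the bounded $(v_t)$-shift changes the distance by $O(1)$. Hence
\[
d_X\!\bigl(g_{-\log\tau}(v_{s_1}(q)),\,e\bigr)=\bigl(\tfrac12-\tfrac{\delta}{12}\bigr)\log T+O_R(1),
\]
so $\bar r\asymp_R T^{\delta/6}\to\infty$. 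Remark \ref{rem:SFF} then gives
\[
\frac{1}{\tau}\int_0^{\tau} f(v_{s_1+s}(q))\,ds=\mu_{Haar}(f)+O(\|f\|_{W_4})\bigl(\bar r^{-\alpha}+\tau^{-\alpha}\bigr),
\]
and for $T\geq T_{\delta,R}$ large enough the error is smaller than $\tfrac12\mu_{Haar}(f)$, so the left-hand side is strictly positive. Consequently some $s\in[0,\tau]$ satisfies $v_{s_1+s}(q)\in\operatorname{supp}f\subseteq B_X(e,1)$, and $c:=(s_1+s)/T$ is the desired value.

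The hard part will be making precise the distance estimate $d_X(g_\sigma(p),e)=\sigma+O_R(1)$ for large $\sigma>0$, without which $\bar r$ cannot be shown to grow. This is standard cusp geometry (the chosen representative of $g_\sigma(p)\cdot i$ has hyperbolic height $e^\sigma w/R$, and for $\sigma\gg\log R$ this height equals the invariant height in $\Gamma\backslash\H$ up to $O(1)$), but it requires a careful identification of the cusp of $p$ and verification that the geodesic flow of positive time moves monotonically deeper into that cusp. One also needs a small amount of bookkeeping to check that the smoothing scale of $f$ can be chosen so that $\|f\|_{W_4}$ is absorbed by $\bar r^{-\alpha}\to 0$, a routine matter.
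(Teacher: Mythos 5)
Your proposal is correct and follows essentially the same route as the paper: reduce via the commutation relations to running the $(v_t)$-flow for time $\asymp T^{1/2+\delta/12}$ from $g_{\log T}(p)$, bound $\bar r\gtrsim_R T^{\delta/6}$ using the cusp geometry of the high periodic point (the paper does this by writing $p=g_{\log R}(h_z(e_i))$ and a triangle inequality rather than a height computation), and conclude positivity of the smoothed average via Remark \ref{rem:SFF}. The only cosmetic difference is how you localize the time to the correct scale: you start the flow at $v_{s_1}(q)$, whereas the paper subtracts the average over $[0,\tau/2]$ from that over $[0,\tau]$; both work.
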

\begin{proof}  Note that by \eqref{eq:norm}, 
$$
g_{\log T}\Big(\{v_{c}(p)\;:\; c\sim  T^{-1/2+\delta/12}\}\Big)= \{v_{r}(g_{\log T}(p))\;:\; r\sim  T^{1/2+\delta/12}\}.
$$

For a given $\epsilon>0$ we approximate $\chi_{B(e,1)}$ by two smooth functions $f_-\leq \chi_{B(e,1)}\leq f_+$, so that $f_-$ is supported inside $B(e,1)$ and $f_+$ is supported in $V_\epsilon(B(e,1))$ and such that $\|f_\pm-\chi_{B(e,1)}\|_{L^1}\leq \epsilon$. Moreover the size of derivatives of $f_{\pm}$ depends on $B(e,1)$ and $\epsilon>0$ but not on $T$.  Using this, by Remark \ref{rem:SFF}, we get
$$
\frac{1}{T^{1/2+\delta/12}}\int_{0}^{T^{1/2}+\delta/12} f_\pm(v_r (g_{\log T}(p))) dr= \mu_{Haar}(f_\pm) +{\rm O}(\|f_{\pm}\|_{W^4})\bar{r}^{-\alpha},
$$
where $\bar{r}=T^{1/2+\delta/12} \cdot e^{-dist(g_{-\log (T^{1/2+\delta/12})}(g_{\log T}(p)),e)}=  T^{1/2+\delta/12} \cdot e^{-dist(g_{\log(T^{1/2-\delta/12})}(p),e)}$.

Since $p$ is periodic of period $R$ it follows that there exists $i\leq k$ (where $k$ is the number of cusps) such that $p=g_{\log R}(h_z(e_i))$, where $|z|\leq 1$.  Therefore for some global $C>0$,

$$
dist(g_{\log(T^{1/2-\delta/12})}(p),e)=dist(g_{R+\log(T^{1/2-\delta/12})}(h_z(e_i)),e)\leq$$$$
 dist(g_{R+\log(T^{1/2-\delta/12})}(h_z(e_i)),h_z(e_i))+dist(h_z(e_i),e) \leq
R+\log(T^{1/2-\delta/12})+C
$$
if $T$ is large enough in terms of $R$. Therefore $\bar{r}\geq T^{1/2+\delta/12} \cdot T^{-1/2+\delta/12}\cdot e^{-C-R}=T^{\delta/6}\cdot e^{-C-R}. $

Since $\|f_\pm\|_{W^4}$ depends only on $B(e,1)$ and $\epsilon$, we can make $\mu_{Haar}(f)$ larger then ${\rm O}(\|f_{\pm}\|_{W^4})\bar{r}^{-\alpha}$, by taking $T$ large enough in terms of $\epsilon, R,C$.
 It then follows that 
$$
\int_{T^{1/2+\delta/12}/2}^{T^{1/2+\delta/12}} f(v_r g_{\log T}(p)) dr=$$$$
\int_{0}^{T^{1/2+\delta/12}} f(v_r g_{\log T}(p)) dr- \int_{0}^{T^{1/2+\delta/12}/2} f(v_r g_{\log T}(p)) dr\geq $$$$
\mu_{Haar}(f_-)T^{1/2+\delta/12}- \frac{1}{2}\mu_{Haar}(f_+)T^{1/2+\delta/12}+ o_{T\to\infty}(T^{1/2+\delta/12})>0,
$$
where in the last inequality we use that $ \mu_{Haar}(f_-)\geq \frac{2}{3} \mu_{Haar}(f_+)$.
 In particular, there exists $r\sim T^{1/2+\delta/12}$ such that  $v_{r}g_{\log T}(p)\in B(e,1)$. This finishes the proof.
\end{proof}

\section{Approximating pieces of orbits by periodic orbits.}
By Proposition 1.1. in  \cite{Ratner} it follows that a point $x\in X$ is non-periodic for $(h_t)$ if and only if the orbit $\{g_sx\}_{s>0}$ is recurrent, i.e. there exists a compact set $K\subset X$ and a sequence of times $\{T^x_i\}_{i=1}^\infty$ such that $g_{\log T^x_i}x\in K$. Since the initial point $x\in X$ is fixed in this section, we will drop the superscript $x$ from the return times $\{T^x_i\}$.  Moreover just by renaming the return times, we will suppose they are of the form $T_i\log T_i$, i.e. 
\begin{equation}\label{eq:ret}
g_{\log(T_i\log T_i)}x\in K
\end{equation}
In the following lemma we will approximate a piece of the orbit of $x$ by a periodic orbit.

\begin{proposition}\label{prop:per} Let $p\in X$ be a periodic point of period $R$. There is $i_0=i(R)\in \R_+$ such that for every $i\geq i_0$, there exists $t_i\in [T_i/2,T_i\log T_i]$  and $\gamma\in \Gamma$ such that 
\begin{equation}\label{eq:per}
\gamma x \begin{pmatrix}1&t_i\\0&1 \end{pmatrix} =p\begin{pmatrix}a&0\\c&a^{-1} \end{pmatrix},
\end{equation}
where $|a-1|<T^{-\delta\cdot 10^{-6}}$ and $c\cdot T_i^{1/2-\delta/10}\in [\frac{1}{4},4]$.
\end{proposition}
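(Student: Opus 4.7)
Writing $\begin{pmatrix}a & 0 \\ c & a^{-1}\end{pmatrix} = g_{2\log a}\,v_{ca}$, the identity~\eqref{eq:per} is equivalent (in $X$) to
\[
h_{t_i}(x) \;=\; v_{c'}\!\bigl(g_{s_3}(p)\bigr)
\]
for some $c' \asymp T_i^{-1/2+\delta/10}$ and $|s_3| \lesssim T_i^{-\delta\cdot 10^{-6}}$. So the task is to show that the $(h_t)$-orbit of $x$ passes within geodesic distance $\lesssim T_i^{-\delta\cdot 10^{-6}}$ of the complementary-horocyclic arc $\{v_{c'}(p) : c' \in [T_i^{-1/2+\delta/10}/4, 4T_i^{-1/2+\delta/10}]\}$ at some time $t_i \in [T_i/2, T_i\log T_i]$.

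\textbf{Rescaling.} Set $y_i := g_{\log T_i}(x)$ and $p_i := g_{\log T_i}(p)$. By~\eqref{eq:norm}, the goal becomes: find $s \in [1/2, \log T_i]$, $c'_* \asymp T_i^{1/2+\delta/10}$, and $|s_3| \le T_i^{-\delta\cdot 10^{-6}}$ with $h_s(y_i) = v_{c'_*}(g_{s_3}(p_i))$; one then recovers $t_i = sT_i$ and $c \asymp c'_*/T_i$. From~\eqref{eq:ret} and $dist(g_r(\cdot),\cdot) \le |r|$ we get $dist(y_i, e) \le C_K + \log\log T_i$, while the computation in the proof of Lemma~\ref{lem:perp} gives $dist(g_{\log T_i^{1/2-\delta/10}}(p), e) \le R + (1/2-\delta/10)\log T_i + C$.

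\textbf{Equidistribution of the complementary horocycle.} Let $\varepsilon_i := T_i^{-\delta\cdot 10^{-6}}$ and consider the flow-box
\[
\mathcal T \;:=\; \bigl\{\, y_i\, h_s\, v_{s_2}\, g_{s_3} : s \in [1/2, \log T_i],\ |s_2|, |s_3| \le \varepsilon_i \,\bigr\} \subset X.
\]
Since the injectivity radius at $y_i$ is $\gtrsim (\log T_i)^{-1} e^{-C_K} \gg \varepsilon_i$, $\mathcal T$ has positive Haar measure. Let $f \in C^4(X)$ approximate $\mathbf 1_{\mathcal T}$ with $\mu_{Haar}(f) > 0$ and $\|f\|_{W_4} \lesssim \varepsilon_i^{-N}$ for some absolute $N$. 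Apply Remark~\ref{rem:SFF} to the $(v_t)$-orbit of $p_i$ on $[T_i^{1/2+\delta/10}/4, 4T_i^{1/2+\delta/10}]$: by the second distance bound above, $\bar r \gtrsim_R T_i^{\delta/5}$, so
\[
\frac{1}{T_i^{1/2+\delta/10}} \int_{T_i^{1/2+\delta/10}/4}^{4T_i^{1/2+\delta/10}} f\bigl(v_{c'_*}(p_i)\bigr)\, dc'_* \;=\; \mu_{Haar}(f) + O_R(\|f\|_{W_4})\, T_i^{-\alpha\delta/5}.
\]
Because $\delta\cdot 10^{-6}$ is chosen much smaller than $\alpha\delta/5$, the main term dominates once $T_i$ is large in terms of $R, \delta$, and the integral is positive. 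Hence some $c'_* \in [T_i^{1/2+\delta/10}/4, 4T_i^{1/2+\delta/10}]$ satisfies $v_{c'_*}(p_i) \in \mathcal T$, i.e.\ $v_{c'_*}(p_i) = y_i\, h_s\, v_{s_2}\, g_{s_3}$ in $X$ for some $s \in [1/2, \log T_i]$, $|s_2|,|s_3| \le \varepsilon_i$.

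\textbf{Descending the identity.} Lifting to $PSL(2, \R)$ produces $\gamma \in \Gamma$ with $g_{\log T_i}\, p\, v_{c'_*} = \gamma\, g_{\log T_i}\, x\, h_s\, v_{s_2}\, g_{s_3}$. Multiplying by $g_{-\log T_i}$ on the left and pushing it through the right-hand side via $g_{\log T_i} h_s = h_{sT_i} g_{\log T_i}$ and $g_{\log T_i} v_{s_2} = v_{s_2/T_i} g_{\log T_i}$ (from~\eqref{eq:norm}) yields $\gamma' \in \Gamma$ with
\[
\gamma'\, x\, h_{sT_i} \;=\; p\, g_{-s_3}\, v_{c''}, \qquad c'' := c'_*/T_i \cdot e^{-s_3} - s_2/T_i.
\]
Since $g_{-s_3}\, v_{c''} = \begin{pmatrix} e^{-s_3/2} & 0 \\ c''\, e^{s_3/2} & e^{s_3/2}\end{pmatrix}$, setting $a := e^{-s_3/2}$, $c := c''\, e^{s_3/2}$, and $t_i := sT_i$ delivers \eqref{eq:per} with $|a-1| \le |s_3| \le T_i^{-\delta\cdot 10^{-6}}$, $t_i \in [T_i/2, T_i\log T_i]$, and $c \asymp T_i^{-1/2+\delta/10}$; a minor tightening of the dyadic $c'_*$-range ensures $c\, T_i^{1/2-\delta/10} \in [1/4, 4]$.

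\textbf{Main obstacle.} The delicate step is the balance of Sobolev norm versus equidistribution error in Remark~\ref{rem:SFF}. The flow-box $\mathcal T$ shrinks in two directions as $T_i \to \infty$, so $\|f\|_{W_4}$ grows polynomially at rate $T_i^{O(\delta\cdot 10^{-6})}$, while the decay available in Remark~\ref{rem:SFF} is only $T_i^{-\alpha\delta/5}$; the exponent $\delta\cdot 10^{-6}$ in the statement is essentially forced by the requirement $O(\delta\cdot 10^{-6}) \ll \alpha\delta/5$. A secondary technical point is the bound $\bar r \gtrsim_R T_i^{\delta/5}$, which crucially uses that $p$ is periodic so its geodesic excursion from the cusp grows only like $(1/2-\delta/10)\log T_i$ with constant depending on $R$.
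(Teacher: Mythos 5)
Your proposal is correct in substance but runs the argument in the opposite direction from the paper's, so it is worth comparing the two. The paper uses Lemma \ref{lem:perp} (effective equidistribution of the $(v_t)$-orbit of the periodic point) only to produce a target point $g_{\log(T^{1-\delta/A})}v_s(p)\in B(e,1)$ sitting in a fixed compact region, and then applies the effective equidistribution of the \emph{horocycle} orbit of the rescaled non-periodic point (Theorem \ref{thm:SFF}) against a shrinking ball $\tilde B$ around that point; the recurrence \eqref{eq:ret} is what controls the error term $r^{-\alpha}$ in that step. You instead apply Remark \ref{rem:SFF} to $p_i=g_{\log T_i}(p)$ directly, with target a tube of width $T_i^{-\delta\cdot 10^{-6}}$ in the $v$- and $g$-directions around the rescaled $h$-orbit segment of $x$: periodicity of $p$ gives $\bar r\gtrsim_R T_i^{\delta/5}$, while the recurrence of $x$ is used only softly, to bound $\mathrm{dist}(y_i,e)\leq C_K+\log\log T_i$ and hence the injectivity radius at $y_i$ from below by $\gg (\log T_i)^{-1}$, so that an embedded core of the tube has Haar measure $\gg T_i^{-2\delta\cdot 10^{-6}}(\log T_i)^{-O(1)}$ and the main term beats the error under the same kind of numerical condition relating $\alpha$ to the $10^{-6}$ exponents that the paper itself implicitly imposes. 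Your route buys economy: a single equidistribution input (the $v$-flow at the periodic point), no auxiliary exponent $A$, and since the thickening is exactly in the $v,g$ directions, unwinding membership produces precisely the lower-triangular matrix of \eqref{eq:per} (your formula for $c''$ is correct, and in fact a single box around $y_ih_1$ would already suffice, since any $t_i\in[T_i/2,T_i\log T_i]$ is acceptable). The paper's route buys the convenience that its shrinking target lies inside the fixed ball $B(e,1)$, so no injectivity-radius or cusp-excursion discussion is needed when smoothing the indicator; in your setup the test function is supported near $y_i$, whose distance to the compact part grows like $\log\log T_i$, which is harmless but should be stated explicitly as the reason the losses in $\mu_{Haar}(f)$ and $\|f\|_{W_4}$ are only logarithmic. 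Two cosmetic corrections: with the convention $X=\Gamma\backslash PSL(2,\R)$ and right action, the lifted identity should read $p\,g_{\log T_i}\,v_{c'_*}=\gamma\,x\,g_{\log T_i}\,h_s\,v_{s_2}\,g_{s_3}$ (your display misplaces $g_{\log T_i}$, though the identity you then derive is the one this correct version gives), and the average over $[T_i^{1/2+\delta/10}/4,\,4T_i^{1/2+\delta/10}]$ should be handled as a difference of two applications of Remark \ref{rem:SFF}, exactly as the paper does inside Lemma \ref{lem:perp}.
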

\begin{proof}
Let $V_{T}(p)=\Big\{\Gamma p\begin{pmatrix}a&0\\c&a^{-1} \end{pmatrix}\;:\; |a-1|<T^{-\delta/1000} \text{ and }c\cdot T^{1/2-\delta/10}\in [\frac{1}{4},4]\Big\}$. Note that the proposition is now equivalent to showing that for every non-periodic $x\in X$, there is $i_R>0$ such that for every $i\geq i_R$
there exists $t_i\in [T_i/2,T_i\log T_i]$ such that $h_{t_i}(x)\in V_{T_i}(p)$. For simplicity we denote $T_i$ by $T$. Let $A=60-5\delta$, so that $(1-\delta/A)(-1/2+\delta/12)=-1/2+\delta/10$. We will apply Lemma \ref{lem:perp} for the time $T=T^{1-\delta/A}$. It implies that there exists $s\sim T^{(1-\delta/A)(-1/2+\delta/12)}=T^{-1/2+\delta/10}$ such that $g_{\log(T^{1-\delta/A})}(v_s(p))\in B(e,1)$. Notice that $\{g_bv_{s+d}(p))\;:\; |b|<T^{-\delta\cdot 10^{-6}}, |d|\leq \frac{1}{T^{1-\delta/A}}\}\subset V_{T}(p)$. This just follows from the fact that 
$$
\begin{pmatrix}1&0\\s+d&1 \end{pmatrix}\cdot \begin{pmatrix}e^{b/2}&0\\0&e^{-b/2}\end{pmatrix}= \begin{pmatrix}e^{b/2}&0\\(s+d)e^{-b/2}&e^{-b/2}\end{pmatrix},
$$
and $(s+d)e^{-b/2}\cdot T^{1/2-\delta/10}\in [\frac{1}{3},3]$. So it is enough to  show that there exists 
$t_i\in [T/2,T\log T]$ such that $h_{t_i}(x)\in \{g_b(v_dv_s(p))\;:\; |b|<T^{-\delta\cdot 10^{-6}}, |d|\leq \frac{1}{T^{1-\delta/A}}\}$. 

By composing with $g_{\log(T^{1-\delta/A})}$ and using the renormalization equations \eqref{eq:norm}, this in turn is equivalent to showing that there exists $\bar{t}\in [T^{\delta/A}/2,T^{\delta/A}\log T]$ such that $h_{\bar{t}}(g_{\log(T^{1-\delta/A})}(x))\in N(s,p,T):=\{g_bv_d(g_{\log(T^{1-\delta/A})}v_sp)\;:\; |b|<T^{-\delta\cdot 10^{-6}}, |d|\leq 1 \}$. In fact we will show a stronger statement where we show it holds with $\tilde{B}=B(s,p,T)=B(g_{\log(T^{1-\delta/A})}v_sp,\frac{1}{4}T^{-\delta\cdot 10^{-6}})$ instead of $N(s,p,T)$.Notice that $\mu_{Haar}(\tilde{B})\sim_C T^{-3\delta \cdot 10^{-6}}$. In particular, we need to show that 
\begin{equation}\label{eq:chit}
\int^{T^{\delta/A}\log T}_{T^{\delta/A}/2} \chi_{\tilde{B}}(h_t(g_{\log(T^{1-\delta/A})}(x)))dt>0.
\end{equation}
Using Theorem \ref{thm:SFF} and approximating $\chi_{\tilde{B}}$ by smooth functions it follows  that
$$
\int^{T^{\delta/A}\log T}_{0} \chi_{\tilde{B}}(h_t(g_{\log(T^{1-\delta/A})}(x)))dt=$$$$
 T^{\delta/A}\log T \mu(\tilde{B})+ {\rm O}(T^{15\delta\cdot 10^{-6}})(T^{\delta/A}\log T)r^{-\alpha},
$$
where $r=(T^{\delta/A}\log T)\cdot e^{-dist(g_{\log(T^{\delta/A} \log T)}(g_{\log(T^{1-\delta/A})}(x)),e)}$. Note that by \eqref{eq:ret}, 
$$g_{\log(T^{\delta/A} \log T)}(g_{\log(T^{1-\delta/A})}(x)=g_{\log(T\log T)}x\in K.$$
 This implies that $r\geq C_K (T^{\delta/A}\log T)$ and this implies that $ {\rm O}(T^{15\delta\cdot 10^{-6}})(T^{\delta/A}\log T)\bar{r}^{-\alpha}= o(T^{\delta/A}\mu(\tilde{B}))$.
 Analogously, 
$$
\int^{T^{\delta/A}/2}_{0} \chi_{\tilde{B}}(h_t(g_{\log(T^{1-\delta/A})}(x)))dt= \frac{1}{2}T^{\delta/A}\mu(\tilde{B})+ {\rm O}(T^{15\delta\cdot 10^{-6}})(\frac{T^{\delta}}{2})\tilde{r}^{-\alpha},
$$
 where $\tilde{r}=(T^{\delta/A}/2)\cdot e^{-dist(g_{\log(T^{\delta/A}/2)}(g_{\log(T^{1-\delta/A})}(x)),e}$. Note that 
$ g_{\log(T^{\delta/A}/2)}(g_{\log(T^{1-\delta/A})}(x)= g_{-\log 2- \log\log (T)}g_{\log(T\log T)}x$ and $g_{\log(T\log T)}x\in K$. From this it follows that $\tilde{r}\geq \frac{T^{\delta/A}}{2\log T}$. Plugging this in, we get that 
$$
\int^{T^{\delta/A}/2}_{0} \chi_{\tilde{B}}(h_t(g_{\log(T^{1-\delta/A})}(x)))dt= \frac{1}{2}(1+o(1))T^{\delta/A}\mu(\tilde{B}).
$$
This implies that the LHS of \eqref{eq:chit} is $\geq (1-o(1))  T^{\delta/A}\log T \mu(\tilde{B})- \frac{1}{2}(1+o(1))T^{\delta/A}\mu(\tilde{B})>0$. 
\end{proof}

We also have the following result which is particularly useful for the proof of density of prime orbits. The proof of the result below uses the same methods as the proof above. We will provide a full proof for completeness. 

\begin{proposition}\label{prop:prime} Let $p\in X$ be a periodic point of period $R$. There is $i_0=i(R)\in \R_+$ such that for every $i\geq i_0$, every $\log T_i \leq H\leq T_i^{\alpha/10}$ there exists a set $G=G_{x,i,H}\subset [T_i/2,T_i\log T_i]$ with $|G|>T_iH^{-5}(\log T_i)^{-2}$ and such that for every $\bar{t}\in G$ there exists $\bar{\gamma}\in \Gamma$, $\bar{a}, \bar{c}$, such that 
\begin{equation}\label{eq:prime}
\bar{\gamma} x \begin{pmatrix}1&\bar{t}\\0&1 \end{pmatrix} =p\begin{pmatrix}\bar{a}&0\\\bar{c}&\bar{a}^{-1} \end{pmatrix},
\end{equation}
where $|\bar{a}-1|<H^{-1}$ and $|\bar{c}|<3H^{-2}$.
\end{proposition}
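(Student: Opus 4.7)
The proof parallels that of Proposition \ref{prop:per}, with the geometric scales rescaled to the parameter $H$ and the argument tuned to extract a quantitative count of $\bar t$'s from a single application of Theorem \ref{thm:SFF}. First I apply Lemma \ref{lem:perp} with input parameter $T^* \asymp H^4$ and a sufficiently small $\delta'$ to produce $s\in \R_+$ with $|s|\leq H^{-2}$ such that $g_{\log T^*}(v_s p)\in B_X(e,1)$; the requirement $T^*\geq T^*_{\delta',R}$ is satisfied once $i$ is large enough (since $H\geq \log T_i\to\infty$).

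Next I consider a coordinate box $N\subset X$ around $g_{\log T^*}(v_s p)$ with anisotropic widths: $c_0 H^{-1}$ in the $g$-direction, $c_0$ in the $v$-direction, and $\alpha_h\in(0,1)$ in the $h$-direction (with $c_0,\alpha_h$ small absolute constants). Using \eqref{eq:norm}, in particular $v_s g_{b'}=g_{b'}v_{se^{b'}}$, together with the bound $|s|\leq H^{-2}$, one checks by direct matrix computation that every point of the $g$-pullback $g_{-\log T^*}(N)$ is of the form $p\begin{pmatrix}\bar a & 0 \\ \bar c & \bar a^{-1}\end{pmatrix}h_\tau$ with $|\bar a-1|<H^{-1}$, $|\bar c|<3H^{-2}$, and $|\tau|\leq \alpha_h T^*$. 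Consequently each visit of the orbit $t\mapsto h_t(g_{\log T^*}(x))$ to $N$ produces, after an $h$-shift of size at most $\alpha_h T^*$, one exact solution of \eqref{eq:prime}.

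Third, I apply Theorem \ref{thm:SFF} to a smooth approximation $f$ of $\chi_N$ along $t\mapsto h_t(g_{\log T^*}(x))$ on the interval $[0, T_i\log T_i/T^*]$. By the return condition \eqref{eq:ret} one has $g_{\log(T_i\log T_i)}(x)\in K$, so the quantity $r$ from the theorem satisfies $r\geq c_K T_i\log T_i/T^*$; combined with a polynomial bound $\|f\|_{W_4}=O(H^C)$ on the smoothed bump and the hypothesis $H\leq T_i^{\alpha/10}$, the error term is negligible relative to the main term $T_i\log T_i\cdot \mu_{Haar}(N)/T^*\asymp T_i\log T_i/H^5$ (the constant $10$ in the exponent of $H$ provides exactly the margin needed here). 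Dividing the $t$-measure of the hits by the $h$-length $\alpha_h$ of a visit produces at least $\sim T_i\log T_i/H^5$ visits, hence that many $\bar t$-solutions of \eqref{eq:prime} in $[T_i/2, T_i\log T_i]$, comfortably exceeding the required $T_iH^{-5}(\log T_i)^{-2}$.

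The main obstacle is the matrix manipulation in the second step, namely verifying that the elongated pullback box fits inside an $h_\tau$-translate of the exact target set with the correct parameter bounds on $\bar a$ and $\bar c$. A secondary difficulty is confirming in the third step that the Sobolev factor $H^C$ does not overwhelm the power savings from \eqref{eq:ret}; this is where the hypothesis $H\leq T_i^{\alpha/10}$ is used crucially.
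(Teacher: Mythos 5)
Your proposal follows essentially the same route as the paper: Lemma \ref{lem:perp} applied at time $\asymp H^{4}$, an anisotropic box around $g_{\log H^{4}}(v_{s}p)$ of widths $\asymp H^{-1}$, $\asymp 1$, $\asymp 1$ in the $g$-, $v$- and $h$-directions (this is exactly the paper's thickened set $\tilde{B}=\bigcup_{u\in(0,1)}h_{u}(\bar{B})$), the conjugation computation showing that its image under $g_{-\log H^{4}}$ lies in the target set up to an $h$-shift of size $O(H^{4})$, and one application of Theorem \ref{thm:SFF} to the renormalized point $g_{\log H^{4}}x$, with the recurrence \eqref{eq:ret} controlling $r$ and the hypothesis $H\leq T_i^{\alpha/10}$ beating the Sobolev factor $H^{O(1)}$. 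Your conversion of time measure in the box into a count of visits, each yielding an exact solution of \eqref{eq:prime} after an $h$-shift, is the same as the paper's ``transversality'' step, and is if anything stated more carefully. (Your claim $|s|\leq H^{-2}$ is a slight overstatement — Lemma \ref{lem:perp} at time $H^{4}$ gives $s\sim H^{-2+\delta'/3}$ — but the paper's own proof makes the identical simplification, so I do not count it against you.)

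There is, however, one concrete gap: the proposition demands $G\subset[T_i/2,\,T_i\log T_i]$, and your single application of Theorem \ref{thm:SFF} over $[0,\,T_i\log T_i/T^{*}]$ only counts visits with $\bar{t}\in[0,T_i\log T_i]$; you then assert without justification that these solutions lie in $[T_i/2,T_i\log T_i]$. This cannot be dismissed trivially: the rescaled initial segment $[0,\,T_iH^{-4}/2]$ has length $T_iH^{-4}/2$, which already exceeds your main term $\asymp T_i\log T_i\,H^{-5}$ as soon as $H\geq 2\log T_i$ (and $H$ may be as large as $T_i^{\alpha/10}$), so a priori all of the time the orbit spends in the box could occur before time $T_i/2$. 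The paper closes exactly this hole by writing the integral over $[T_iH^{-4}/2,\,T_iH^{-4}\log T_i]$ as a difference of two integrals starting at $0$ and applying Theorem \ref{thm:SFF} a second time on $[0,\,T_iH^{-4}/2]$, using $g_{\log T_i}x=g_{-\log\log T_i}\,g_{\log(T_i\log T_i)}x$ with $g_{\log(T_i\log T_i)}x\in K$ to get $r\gtrsim T_iH^{-4}/\log T_i$ there; this shows the initial segment contributes only about $\tfrac12 T_iH^{-5}$, leaving $\gtrsim T_i\log T_i\,H^{-5}$ worth of visits in the required range. Adding this (routine, second) application of the equidistribution theorem makes your argument match the paper's proof.
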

\begin{proof} Consider the set $\bar{V}_{T}(p)=\Big\{\Gamma p\begin{pmatrix}a&0\\c&a^{-1} \end{pmatrix}\;:\; |a-1|<H^{-1} \text{ and }|c|<3H^{-2}\}$. For simplicity we denote $T_i$ by $T$. We will apply Lemma \ref{lem:perp} for the time $H^4$. It implies that there exists $s\sim H^{-2+\delta/3}$ such that $g_{\log H^4}(v_s(p))\in B(e,1)$. Notice that $\{g_bv_{s+d}(p))\;:\; |b|<H^{-1}, |d|\leq H^{-4}\}\subset V_{T}(p)$. This just follows from the fact that 
$$
\begin{pmatrix}1&0\\s+d&1 \end{pmatrix}\cdot \begin{pmatrix}e^{b/2}&0\\0&e^{-b/2}\end{pmatrix}= \begin{pmatrix}e^{b/2}&0\\(s+d)e^{-b/2}&e^{-b/2}\end{pmatrix},
$$
and $(s+d)e^{-b/2}\leq  3H^{-2}$. So it is enough to  show that there exists $G$ as in the statement of the proposition such that for $\bar{t}\in G$, $h_{\bar{t}}(x)\in \{ g_b(v_dv_s(p))\;:\; |b|<H^{-1}, |d|\leq H^{-4}\}$. By composing both side of the last inclussion with $g_{\log(H^4)}$ and using the renormalization equations \eqref{eq:norm}, this in turn is equivalent to showing that there exists a set $\bar{G}\subset [H^{-4}T/2,H^{-4}T\log T] $  with $|\bar{G}|\geq \frac{1}{2}TH^{-9}$ and such that for $\bar{t}\in \bar{G}$, $h_{\bar{t}}(g_{\log(H^4)}x)\in \Big\{ g_b(v_dg_{\log(H^4)}v_s(p))\;:\; |b|<H^{-1}, |d|\leq 1\Big\}:=\bar{B}$. Note that $g_{\log(H^4)}v_s(p)\in B(e,1)$ by the definition of $s$. By transversality of $(h_t)$, it is enough to show the statement for $\tilde{B}=\bigcup_{u\in (0,1)}h_u(\bar{B})$. Notice that $\mu_{Haar}(\tilde{B})\sim H^{-1}$. 
Consider
\begin{equation}\label{eq:chit'}
\int^{H^{-4}T\log T}_{H^{-4}T/2} \chi_{\tilde{B}}(h_t(g_{\log(H^4)}(x)))dt.
\end{equation}
The proof now follows the same lines as the corresponding proof in Proposition \ref{prop:per}.
The above is equal to 
$$\int^{H^{-4}T\log T}_{0} \chi_{\tilde{B}}(h_t(g_{\log(H^4)}(x)))dt-\int^{H^{-4}T/2}_{0} \chi_{\tilde{B}}(h_t(g_{\log(H^4)}(x)))dt.
$$

By  Theorem \ref{thm:SFF}, we get that the main terms of the difference of both these integrals reduce to 
which 
$$
(H^{-4}T\log T-H^{-4}T/2)\mu(\tilde{B})\geq \frac{T}{H^5},
$$
Moreover the error for the first one is bounded by $H^{-4}(T\log T){\rm O}(H^5)r^{-\alpha}$, where $r=(H^{-4}T\log T)\cdot e^{-dist(g_{\log(H^{-4} T \log T)}(g_{\log(H^4)}(x)),e)}$. Note that $g_{\log(H^{-4} T \log T)}(g_{\log(H^4)}(x)\in K$ and so the error is  ${\rm O}(H^{5} T^{1-\alpha}\log T)= o(T/H^5)$ as $H<T^{\alpha/10}$. Analogously the error for the second one 
is bounded by  $H^{-4}T{\rm O}(H^5)r^{-\alpha}$, where $r=(H^{-4}T)\cdot e^{-dist(g_{\log(H^{-4} T)}(g_{\log(H^4)}(x)),e)}$. It remains to notice that $g_{\log T}x=g_{-\log \log T} g_{\log(T\log T)x}$ and $g_{\log(T\log T)x}\in K$. Plugging this to the error term shows that it is also $o(T/H^5)$. The statement follows. 
\end{proof}

The following lemma is straightforward: 
\begin{lemma}\label{lem:prime} Assume that $x,y\in X$ satisfy : for some $\gamma\in \Gamma$ and $|a-1|<H^{-1}$ $|c|<3H^{-2}$,
$$
\gamma x =y\begin{pmatrix}a&0\\c&a^{-1}\end{pmatrix}, 
$$
Then $d(h_t(x),h_t(y))<\epsilon$ for every $|t|<\epsilon^{2}H$ (here $H$ is large enough in terms of $\epsilon$).
\end{lemma}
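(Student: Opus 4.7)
The plan is to reduce the desired bound $d(h_t(x), h_t(y)) < \epsilon$ to a direct matrix computation: conjugating the ``slanted error'' $B := \begin{pmatrix} a & 0 \\ c & a^{-1} \end{pmatrix}$ by $h_t$ and showing the result lies close to the identity in $PSL(2,\R)$ whenever $|t| < \epsilon^2 H$.

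First I would observe that since $\gamma \in \Gamma$, the hypothesis $\gamma x = y B$ gives $\Gamma x h_t = \Gamma y B h_t = \Gamma y h_t \cdot (h_t^{-1} B h_t)$, so that
$$
d(h_t(x), h_t(y)) \;\leq\; d_{PSL(2,\R)}\!\bigl(h_t^{-1} B h_t,\, I\bigr),
$$
using that the quotient metric is bounded above by the ambient metric and that near $I$ the $PSL(2,\R)$ metric is comparable to the matrix-entry distance. Taking $H$ large in terms of $\epsilon$ ensures $h_t^{-1} B h_t$ lies in a neighborhood of $I$ small enough that this comparability applies.

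Next I would compute directly
$$
h_t^{-1} B h_t \;=\; \begin{pmatrix} a - tc & t(a - a^{-1}) - t^2 c \\ c & a^{-1} + tc \end{pmatrix},
$$
and bound each entry using $|a-1| < H^{-1}$, $|a - a^{-1}| \leq 3 H^{-1}$ (for $H$ large), $|c| < 3 H^{-2}$, and $|t| < \epsilon^2 H$. The dominant contribution is the top-right off-diagonal entry, bounded by $3H^{-1} \cdot \epsilon^2 H + 3H^{-2} \cdot \epsilon^4 H^2 = O(\epsilon^2)$; the bottom-left is $|c| < 3H^{-2}$; and the diagonal entries differ from $1$ by at most $H^{-1} + 3\epsilon^2/H + 2H^{-1}$. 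All four quantities are $O(\epsilon^2)$ for $H$ sufficiently large, which is smaller than $\epsilon$ once $\epsilon < 1$.

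There is no serious obstacle here: the argument is essentially an explicit $2 \times 2$ matrix computation together with the trivial estimate for the quotient metric. The only minor point to verify is the hypothesis ``$H$ large enough in terms of $\epsilon$,'' which is needed both for the linearization $|a^{-1} - 1| \leq 2|a-1|$ and so that the image $h_t^{-1} B h_t$ lies strictly inside the neighborhood of $I$ where the metric comparison applies; both are harmless.
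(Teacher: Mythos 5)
Your proof is correct and follows essentially the same route as the paper, which simply exhibits the conjugation identity $h_{-t}\begin{pmatrix}a&0\\c&a^{-1}\end{pmatrix}h_t=\begin{pmatrix}a-ct&(a-a^{-1})t-ct^2\\c&a^{-1}+ct\end{pmatrix}$ and notes the conclusion is immediate; you have merely written out the entry bounds and the reduction via left invariance of the metric that the paper leaves implicit.
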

\begin{proof} This is an immediate consequence of the identity
$$
\begin{pmatrix}1&t\\0&1\end{pmatrix}\begin{pmatrix}a&0\\c&a^{-1}\end{pmatrix}\begin{pmatrix}1&-t\\0&1\end{pmatrix}=\begin{pmatrix}a-ct&(a-a^{-1})t-ct^2\\c&a^{-1}+ct.\end{pmatrix}
$$
\end{proof}

Let $Orb(p)=\{h_tp\}_{t=0}^{R}$ be the orbit of the periodic point $p$. Proposition \ref{prop:per} has the following consequence:
\begin{lemma}\label{lem:per} Define $m_{a,c}(t)=\frac{at}{a^{-1}+ct}$. Let $a,c$ and $t_i\in [T/2,T\log (T)]$ be as in Proposition \ref{prop:per}. Then for every $t\in [t_i,t_i+T^{1/2-\delta/5}]$, 
$$
d\Big(h_t(x),h_{m_{a,c}(t-t_i)}(p)\Big)<\epsilon^{1/2}.
$$
Moreover, for any $t'\in [t_i,t_i+ T^{1/2-\delta/5}]$ there is $p'=p'(t')\in Orb(p)$  such that for any $t\in [t',t'+ T^{1/2-\delta/5}]$,
\begin{equation}\label{eq:apr}
d\Big(h_t(x),h_{m_{a+c(t'-t_i),c}(t-t')}(p')\Big)<\epsilon^{1/3}.
\end{equation}

\end{lemma}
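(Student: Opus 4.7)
The plan is to reduce both parts of the statement to a single matrix identity in $SL(2,\R)$ and then iterate it. The central identity, verified by direct multiplication, is
$$
\begin{pmatrix} a & 0 \\ c & a^{-1} \end{pmatrix} \begin{pmatrix} 1 & s \\ 0 & 1 \end{pmatrix} = \begin{pmatrix} 1 & m_{a,c}(s) \\ 0 & 1 \end{pmatrix} \begin{pmatrix} A & 0 \\ c & A^{-1} \end{pmatrix}, \qquad A := \frac{1}{a^{-1}+cs}.
$$
It rewrites the $h_s$-translate of a lower-triangular factor as a horocycle shift by $m_{a,c}(s)$ followed by another lower-triangular factor of the same form, whose off-diagonal is still $c$ and whose diagonal $A$ differs from $1$ by $O(|a-1|+|cs|)$.

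For the first claim I set $s=t-t_i$ and combine this identity with the defining relation from Proposition \ref{prop:per} to obtain, in $PSL(2,\R)$,
$$
\gamma x h_t = \gamma x h_{t_i}h_{t-t_i} = h_{m_{a,c}(t-t_i)}(p) \cdot \begin{pmatrix} A & 0 \\ c & A^{-1} \end{pmatrix}.
$$
With $|a-1|<T^{-\delta\cdot 10^{-6}}$, $|c|\sim T^{-1/2+\delta/10}$ and $0\leq s\leq T^{1/2-\delta/5}$, one has $|cs|\ll T^{-\delta/10}$ and hence $|A-1|=O(T^{-\delta\cdot 10^{-6}})$. For $T$ large in terms of $\epsilon$ the right-most factor is therefore $\epsilon^{1/2}$-close to the identity; since the metric on $X$ is induced from a left-invariant one on $PSL(2,\R)$, this yields $d(h_tx, h_{m_{a,c}(t-t_i)}(p))<\epsilon^{1/2}$.

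For the second claim I apply the first claim at time $t'$: with $p':=h_{m_{a,c}(t'-t_i)}(p)\in Orb(p)$ and $A_1:=1/(a^{-1}+c(t'-t_i))$, the previous step gives $\gamma x h_{t'}=p'\cdot\begin{pmatrix}A_1 & 0\\ c & A_1^{-1}\end{pmatrix}$. Right-multiplying by $h_{t-t'}$ and re-applying the central identity with $(A_1,c,t-t')$ in place of $(a,c,s)$ produces
$$
\gamma x h_t = h_{m_{A_1,c}(t-t')}(p') \cdot \begin{pmatrix} A_2 & 0 \\ c & A_2^{-1}\end{pmatrix},\qquad A_2 = \frac{1}{A_1^{-1}+c(t-t')}.
$$
Since $|A_1-1|$, $|c|$ and $|t-t'|$ obey the same orders of magnitude as in the first step, $|A_2-1|=O(T^{-\delta\cdot 10^{-6}})$ and the final factor lies within $\epsilon^{1/3}$ of the identity, giving \eqref{eq:apr}. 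The first argument $A_1$ is to be identified with the quantity $a+c(t'-t_i)$ appearing in the statement (both encode the same shear parameter of the new lower-triangular factor attached to $p'$ under the standing assumption $a\approx 1$).

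The proof is essentially bookkeeping in $SL(2,\R)$. The only point requiring care is verifying that, after the first iteration at time $t'$, the lower-triangular factor produced is still small enough for the estimates to repeat on a further interval of length $T^{1/2-\delta/5}$; this follows because the accumulated contribution to $|cs|$ remains $O(T^{-\delta/10})$ throughout. No genuine analytic obstacle arises.
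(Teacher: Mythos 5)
Your proof is correct and takes essentially the same route as the paper: the same $SL(2,\R)$ commutation identity (the paper writes it as the conjugation $h_{-s}\left(\begin{smallmatrix}a&0\\ c&a^{-1}\end{smallmatrix}\right)h_s=h_{m_{a,c}(s)-s}\left(\begin{smallmatrix}(a^{-1}+cs)^{-1}&0\\ c&a^{-1}+cs\end{smallmatrix}\right)$), the same bounds $|c(t-t_i)|\ll T^{-\delta/10}$ with left invariance of $d$, and the same iteration at $t'$ with $p'=h_{m_{a,c}(t'-t_i)}(p)$. Your explicit identification of $A_1=(a^{-1}+c(t'-t_i))^{-1}$ with the parameter $a+c(t'-t_i)$ appearing in the statement is precisely the step the paper also performs, only implicitly, when it says to repeat the first part with $t'$ in place of $t_i$.
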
 
\begin{proof}
We will use Proposition \ref{prop:per}. Note that for $t\in [t_i,t_i+T^{1/2-\delta/5}]$, we have 
\begin{equation}\label{eq:gam}
\gamma x h_t=\gamma x h_{t_i} h_{t-t_i}=ph_{t-t_i}\Big[h_{-(t-t_i)}\begin{pmatrix}a&0\\c&a^{-1} \end{pmatrix}h_{t-t_i}\Big] .
\end{equation}
By a direct computation, 
$$
h_{-(t-t_i)}\begin{pmatrix}a&0\\c&a^{-1} \end{pmatrix}h_{t-t_i}= h_{m_{a,c}(t-t_i)-(t-t_i)} \begin{pmatrix}(a^{-1}+c(t-t_i))^{-1}&0\\c&a^{-1}+c(t-t_i) \end{pmatrix}
$$
Pluygging this into \eqref{eq:gam} we get
\begin{equation}\label{eq:comp}
\gamma x h_t=ph_{m_{a,c}(t-t_i)}\begin{pmatrix}(a^{-1}+c(t-t_i))^{-1}&0\\c&a^{-1}+c(t-t_i)\end{pmatrix}
\end{equation}
Notice that $|c(t-t_i)|\ll T^{-1/2+\delta/10}\cdot T^{1/2-\delta/5}=T^{-\delta/10}$ and $|a-1|<\epsilon$. Therefore the matrix on the RHS is $2\epsilon$ close to $Id$. The first part then follows from left invariance of $d$.

For the second part notice that by \eqref{eq:comp},
$$
\gamma x h_{t'}= p \cdot h_{m_{a,c}(t'-t_i)} \begin{pmatrix}(a^{-1}+c(t'-t_i))^{-1}&0\\c&a^{-1}+c(t'-t_i)\end{pmatrix}
$$
Denoting $p'=ph_{m_{a,c}(t'-t_i)}$, we get 
$$
\gamma x h_{t'}= p' \begin{pmatrix}(a^{-1}+c(t'-t_i))^{-1}&0\\c&a^{-1}+c(t'-t_i)\end{pmatrix}
$$
Let $A(t'):=\begin{pmatrix}(a^{-1}+c(t'-t_i))^{-1}&0\\c&a^{-1}+c(t'-t_i)\end{pmatrix}$. Then
$$
\gamma x h_t = \gamma x h_{t'} h_{t-t'}= p'h_{t-t'} [h_{-(t-t')}A(t')h_{t-t'}].
$$
This is an analogous equation to \eqref{eq:gam}. The matrix $A(t')$ has the diagonal entries of size  $1\pm 2\epsilon$ and $c$ did not change. We can therefore repeat the proof of the first part just with $t'$ instead $t_i$. This finishes the proof of the second part.
\end{proof}

\section{Density of orbits at times $\{n^{2-\delta}\}$; proof of Theorem \ref{thm:main}}
We will use the notation from the section above. Let $p\in X$ be a periodic point of period $R$ and let $\mu_p$ denote the unique invariant probability measure on $Orb(p)$. By the above results the proof of density of $\{h_{n^{2-\delta}}x\}$ follows from the following proposition:
\begin{proposition}\label{prop:equi} Let $f\in Lip(X)$ with compact support be fixed. There exists $t'\in [t_i,t_i+T^{1/2-\delta/5}]$  such that
$$
\sup_{p'\in Orb(p)}\Big|\frac{1}{|I|}\sum_{n\in I} f(h_{m_{a+c(t'-t_i),c}(n^{2-\delta}-t')}p')- \int_{Orb(p)}f(z)d\mu_p(z)\Big|=o_{T\to \infty }(1).
$$
where $I=[t'^{\frac{1}{2-\delta}},(t'+T^{1/2-\delta/5})^{\frac{1}{2-\delta}}]$.
\end{proposition}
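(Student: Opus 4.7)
\textbf{Proof plan for Proposition \ref{prop:equi}.} The plan is to reduce the statement to a Weyl-type equidistribution mod $R$ for a polynomial phase. Since $Orb(p) \simeq \R/R\Z$, any $p' \in Orb(p)$ has the form $p' = h_{s_0} p$ for some $s_0 \in [0,R)$, and therefore $h_{m_{a',c}(n^{2-\delta}-t')}(p') = h_{m_{a',c}(n^{2-\delta}-t')+s_0}(p)$. Fourier expanding the Lipschitz function $f$ restricted to $Orb(p)$ (after a standard smooth approximation) and pulling out the phase $e(\ell s_0/R)$ from each mode shows that the supremum in $p'$ is controlled mode-by-mode, so it suffices to show that for every non-zero integer $\ell$,
\[
S_\ell \;:=\; \frac{1}{|I|}\sum_{n\in I} e\!\Bigl(\frac{\ell\, m_{a',c}(n^{2-\delta}-t')}{R}\Bigr) \;=\; o_{T\to\infty}(1),
\]
where $a' = a + c(t'-t_i)$.

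Next I would approximate the phase $m_{a',c}(n^{2-\delta}-t')$ by a polynomial in $n$ of degree depending only on $\delta$. Since $|c|\asymp T^{-1/2+\delta/10}$ and $|n^{2-\delta}-t'|\le T^{1/2-\delta/5}$ on $I$, one has $|c(n^{2-\delta}-t')|\ll T^{-\delta/10}$, so the geometric expansion
\[
m_{a',c}(s) \;=\; (a')^{2} s\sum_{j\ge 0}(-a'cs)^{j}
\]
converges uniformly on the relevant range of $s$ and, truncated at a large constant order $K=K(\delta)$, has tail $o(1)$. Independently, with $N_0 := (t')^{1/(2-\delta)}$ so that $N_0^{2-\delta}=t'$, the ratio $|n-N_0|/N_0$ is $o(1)$ on $I$, so the binomial series
\[
n^{2-\delta}-t' \;=\; \sum_{i\ge 1}\binom{2-\delta}{i} N_0^{2-\delta-i}(n-N_0)^{i}
\]
also converges and can be truncated at a large constant $J=J(\delta)$ with tail $o(1)$. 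Substituting one expansion into the other yields a polynomial $Q(n)\in\R[n]$ of degree at most $D := KJ$ such that $m_{a',c}(n^{2-\delta}-t') = Q(n) + o(1)$ uniformly in $n \in I$, so up to negligible error $S_\ell$ equals $|I|^{-1}\sum_{n\in I} e(\ell Q(n)/R)$. This is a classical polynomial Weyl sum over an interval of length $|I|\asymp T^{c\delta}\to\infty$, and Weyl's inequality gives $S_\ell = o(1)$ provided some non-constant coefficient of $\ell Q/R$ admits a rational approximation $a/q$ with $q$ in the admissible range $|I|<q<|I|^{D-1}$.

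The main obstacle is verifying this Diophantine condition. The leading coefficient of $Q$ in $(n-N_0)$ equals $(a')^{2}(2-\delta)N_0^{1-\delta}$, and direct differentiation shows that as $t'$ ranges over $[t_i,\,t_i+T^{1/2-\delta/5}]$ each coefficient $q_i(t')$ varies with $|dq_i/dt'|$ bounded below by a concrete power of $T$. Exploiting the freedom to choose $t'$, the plan is to union-bound over rationals $a/q$ in the admissible Weyl range, over the finite list of coefficients of $Q$, and over the finitely many frequencies $\ell$ needed to approximate $f$; using the lower bound on $|dq_i/dt'|$ one concludes that the set of $t'$ for which \emph{no} $q_i(t')/R$ has an approximation of the required quality has measure $o(T^{1/2-\delta/5})$. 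A suitable $t'$ thus exists, Weyl's inequality yields $|\sum_{n\in I} e(\ell Q(n)/R)| \ll |I|^{1-c}$ for some $c = c(\delta)>0$, and combining this with the polynomial approximation error and the Fourier expansion of $f$ gives the proposition.
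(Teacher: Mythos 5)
Your overall skeleton matches the paper's: transfer to the circle via the period-$R$ parametrization, Fourier-expand $f$, approximate $m_{a',c}(n^{2-\delta}-t')$ by a polynomial of bounded degree, and finish with a Weyl-type bound. The divergence — and the gap — is at the Diophantine step. First, the Weyl criterion you invoke is not correct as stated: the classical Weyl inequality requires the rational approximation $a/q$ with $|I|<q<|I|^{D-1}$ for the \emph{leading} coefficient, and in this problem the leading coefficient is degenerately small — since $|c|\asymp T^{-1/2+\delta/10}$ one checks that $|\alpha_j|\,|I|^{j}$ is a \emph{negative} power of $T$ for all large $j$ (in particular for $j=D$), so no approximation with denominator in that range exists and the classical route yields nothing. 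What is needed is the Green--Tao form (the paper's Lemma \ref{lem:Weyl}), whose contrapositive only asks that for every $q\leq \xi^{-O_D(1)}$ there be \emph{some} index $j$ with $\|q\alpha_j\|>\xi^{-O_D(1)}|I|^{-j}$. Second, your plan to secure such a condition by a union bound over a set of bad $t'$ of measure $o(T^{1/2-\delta/5})$ is left unexecuted and is delicate: the coefficients of degree $j\geq 2$ vary over the $t'$-window by far less than the spacing of the relevant rational obstructions (their $t'$-dependence enters only through $\bar a=a+c(t'-t_i)$ and $t'^{\frac{1-\delta}{2-\delta}j}$ weighted by $c^{j-1}$), so any genericity argument in $t'$ would have to be run on the degree-one coefficient alone, which you neither isolate nor support with the claimed derivative lower bounds.

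The paper shows none of this selection is needed. Its key point, equation \eqref{eq:gold}, is a deterministic "sweet spot" observation: because the exponent of $T$ in $|\tilde{b}_{i}|\,|I|^{i}$ decreases linearly in $i$, there is always an index $2\leq i_0\leq d$ with $C^{-1}(\log T)^{-i_0}T^{\eta}<|\tilde{b}_{i_0}|\,|I|^{i_0}<CT^{\eta}$ for a small $\eta=\eta(\delta)>0$. Then $|\tilde{b}_{i_0}|$ itself is so small that $|q\tfrac{k}{R}\tilde{b}_{i_0}|<1/3$ for all $q\leq\log\log T$, $|k|\leq K$, $R\leq\log T$, whence $\|q\tfrac{k}{R}\tilde{b}_{i_0}\|=|q\tfrac{k}{R}\tilde{b}_{i_0}|\geq(\log T)|I|^{-i_0}$, exactly the hypothesis needed to rule out the conclusion of Lemma \ref{lem:Weyl}. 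The only role of the choice of $t'$ in the paper is to make $t'^{\frac{1}{2-\delta}}$ an integer so that the Weyl sum is a polynomial in an integer variable over an integer interval (a point your sketch also glosses over), and the paper truncates the expansion of $n^{2-\delta}-t'$ at the linear term, which suffices and keeps the polynomial degree at $d=[5\delta^{-1}]+2$ rather than your $KJ$. As written, your proposal would not close without replacing the misquoted Weyl input and either carrying out the $t'$-genericity argument for the linear coefficient or adopting the paper's coefficient-size argument.
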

The proof of Proposition \ref{prop:equi} will be given in a separate subsection, let us first show how it implies Theorem \ref{thm:main}.

\begin{proof}[Proof of Theorem \ref{thm:main}]
If the orbit $\{h_{n^{2-\delta}}x\}$ is not dense then there exists $\kappa>0$ and $y\in X$ such that the orbit never enters $B(y,\kappa)$. Let $f$  be a non-negative Lipschitz  function $f$ so that the support of $f$ is contained in $B(y,\kappa)$ (and positive on a fixed subset of $B(y,\kappa)$ of measure $1/2\mu(B(y,\kappa))$). It follows that for any interval $I\subset [0,\infty)$, 
$$
\sum_{n\in I} f(h_{n^{2-\delta}}x)=0.
$$
Let $I=[t'^{\frac{1}{2-\delta}},(t'+T^{1/2-\delta/5})^{\frac{1}{2-\delta}}]$ so that if $n\in I$ then $n^{2-\delta} \in [t',t'+T^{1/2-\delta/5}]$. Using Lemma \ref{lem:per} and Proposition \ref{prop:equi}  it follows that 
$$
\sum_{n\in I} f(h_{n^{2-\delta}}x)=\sum_{n\in I} f(h_{m_{a+c(t'-t_i),c}(n^{2-\delta}-t')}p')+{\rm O}_f(\epsilon^{1/3}|I|)= $$$$
|I|\int_{Orb(p)}f(z)d\mu_p(z)+o_{T\to \infty}(|I|)+ {\rm O}_f(\epsilon^{1/3}|I|).
$$
Note however that by a result of Sarnak, \cite{Sarnak}, it follows that $\int_{Orb(p)}f(z)d\mu_p(z)= \mu_{Haar}(f)+o_{R\to \infty}(1)$. Summarizing, 
$$
\sum_{n\in I} f(h_{n^{2-\delta}}x)\geq \frac{1}{2}|I|\mu_{Haar}(B(y,\kappa))+ o_{T\to \infty}(|I|)+ {\rm O}_f((\epsilon^{1/3})|I|)+o_{\min(R,T)\to \infty}(|I|).
$$
Taking $\epsilon$ small enough and $R,T$ large enough, we get that the LHS can not be $0$. This finishes the proof.
\end{proof}

\subsection{Proof of Proposition \ref{prop:equi}} 
We first start by approximating the function $m_{a,c}(\cdot)$ by polynomials. This is a simple consequence of Taylor expansion.

\begin{lemma}\label{lem:pol} Let $J=[0,T^{1/2-\delta/5}]$. Let $|a-1|<\epsilon$ and let $c\sim T^{-1/2+\delta/10}$.
 Let $b_i=b_i(a,c):=(-1)^{i-1}c^{i-1}a^{i+1}$ and let $P(t)=\sum_{i=1}^{d}b_it^i$ with $d=deg P=[5\delta^{-1}]+2$. Then
$$
|m_{a,c}(t)-P(t)|= {\rm O}\Big(\frac{1}{\log T}\Big) \text{ for every } t\in J.
$$
\end{lemma}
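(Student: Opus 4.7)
The plan is to view $m_{a,c}(t)=at/(a^{-1}+ct)$ as a geometric series in the small quantity $act$, and read off the coefficients. First I would clear the denominator by multiplying top and bottom by $a$, obtaining
$$
m_{a,c}(t)=\frac{a^{2}t}{1+act}.
$$
On $J$ one has $|act|\leq |a|\cdot|c|\cdot T^{1/2-\delta/5}\ll T^{-\delta/10}$, using $|a-1|<\epsilon$ and $|c|\sim T^{-1/2+\delta/10}$. In particular $|act|<1/2$ for $T$ large, so the expansion $(1+act)^{-1}=\sum_{k\geq 0}(-act)^{k}$ converges absolutely and uniformly on $J$.

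Second, I would substitute the geometric series back in and re-index by $i=k+1$:
$$
m_{a,c}(t)=a^{2}t\sum_{k=0}^{\infty}(-act)^{k}=\sum_{i=1}^{\infty}(-1)^{i-1}a^{i+1}c^{i-1}t^{i}=\sum_{i=1}^{\infty}b_{i}t^{i}.
$$
This identifies $P(t)$ as the $d$-th partial Taylor sum of $m_{a,c}$ at $0$, so the error is precisely the tail $m_{a,c}(t)-P(t)=\sum_{i>d}b_{i}t^{i}$.

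Third, I would bound the tail via the factorisation $b_{i}t^{i}=a^{2}t\cdot(-act)^{i-1}$ and the sum of the resulting geometric series:
$$
\bigl|m_{a,c}(t)-P(t)\bigr|\ll T^{1/2-\delta/5}\sum_{i>d}|act|^{i-1}\ll T^{1/2-\delta/5}\cdot T^{-\delta d/10}.
$$
Since $d=[5\delta^{-1}]+2\geq 5\delta^{-1}+1$, we have $\delta d/10\geq 1/2+\delta/10$, so the right-hand side is $\ll T^{-3\delta/10}$, far smaller than $1/\log T$ for $T$ large. The whole argument reduces to a one-line geometric series estimate; the only (trivial) obstacle is keeping track of the exponents of $T$ to verify that $d$ has been chosen large enough, and the resulting polynomial decay is far stronger than the logarithmic bound actually stated, suggesting the $1/\log T$ in the conclusion is chosen simply because this is all that is needed in the subsequent application.
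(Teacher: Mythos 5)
Your proof is correct and is essentially the paper's argument: both expand $m_{a,c}(t)$ in powers of $t$ about $0$ with exactly the coefficients $b_i$, the only difference being that the paper controls the remainder via a bound on the $j$-th derivative (Lagrange-type remainder with $j=[5\delta^{-1}]+1$), whereas you bound the explicit geometric tail $\sum_{i>d}|a^2t|\,|act|^{i-1}$ directly. Your exponent bookkeeping ($|act|\ll T^{-\delta/10}$ and $\delta d/10\geq 1/2+\delta/10$, giving an error $\ll T^{-3\delta/10}$, stronger than the stated $O(1/\log T)$) matches the role of the paper's choice of degree, so nothing further is needed.
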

\begin{proof} Notice that we have for every $i\in \N$ $m^{(i)}_{a,c}(t)=\frac{(-1)^{i-1}i!c^{i-1}}{(a^{-1}+ct)^{i+1}}$. Notice that for $j=[5\delta^{-1}]+1$, by the bounds on $c$ and $a$,
$$
|m^{(j)}_{a,c}(t)||J|^j=\Big|\frac{(-1)^{j-1}j!c^{j-1}}{(a^{-1}+ct)^{j+1}}\Big| |J|^j\leq 4 j! T^{(j-1)(-1/2+\delta/10)+(1/2-\delta/5)j},
$$
and $(j-1)(-1/2+\delta/10)+(1/2-\delta/5)j<0$ by the choice of $j$. Therefore, $|m^{(j)}_{a,c}(t)||J|^j\leq (\log T)^{-1}$. So by Taylor's expansion up to order $j$:
$$m_{a,c}(t)=\sum_{i=0}m^{(i)}_{a,c}(0)\frac{1}{i!}t^i=\sum_{i=0}(-1)^{i-1} \frac{c^{i-1}}{a^{-(i+1)}}t^i+ {\rm O}\Big(\frac{1}{\log T}\Big).
$$
This finishes the proof.
\end{proof}

Let now $t'\in [t_i,t_i+T^{1/2-\delta/5}]$ be such that $t'^{\frac{1}{2-\delta}}\in \N$. Notice that such $t'$ exists as the length of the interval 
 $[t_i^{\frac{1}{2-\delta}},(t_i+T^{1/2-\delta/5})^{\frac{1}{2-\delta}}]$ is larger than $1$ (as can be seen by the mean value theorem for the function $x^{\frac{1}{2-\delta}})$.

 Consider the interval $I=I(t')=[t'^{\frac{1}{2-\delta}},(t'+T^{1/2-\delta/5})^{\frac{1}{2-\delta}}]$.  Since $t_i\in [T/2,T\log T]$, and $t'\in [t_i,2t_i]$ by the mean value theorem,
$$
 |I|\geq T^{1/2-\delta/5} \cdot (T\log T)^{\frac{1}{2-\delta}-1}=T^{\frac{\delta+2\delta^2}{10(2-\delta)}} (\log T)^{-1}.
 $$
 and analogously, $|I|\leq 4 T^{1/2-\delta/5} \cdot T^{\frac{1}{2-\delta}-1}\leq 4 T^{\frac{\delta+2\delta^2}{10(2-\delta)}}$.
 Summarizing
 \begin{equation}\label{eq:len}
  4 T^{\frac{\delta+2\delta^2}{10(2-\delta)}}\geq |I|\geq T^{\frac{\delta+2\delta^2}{10(2-\delta)}} (\log T)^{-1}
 \end{equation}

 Let $P_{t'}$ be the polynomial from Lemma \ref{lem:pol} applied to $\bar{a}=a+c(t'-t_i)$ and $c=c$ and let $b_i=b_i(t')=(-1)^{i-1}c^{i-1}\bar{a}^{i+1}$ be its coefficients.  
 Notice that if $n\in I$, then $|n^{2-\delta}-t'|<T^{1/2-\delta/5}$ and so by Lemma \ref{lem:pol} and smoothness of $(h_t)
$ we get  that 
$$
d(h_{m_{a+c(t'-t_i),c}(n^{2-\delta}-t')}p', h_{P_{t'}(n^{2-\delta}-t')}p')={\rm O}((\log T)^{-1}). 
$$

Therefore to prove Proposition \ref{prop:equi} it is enough to show the following:
\begin{equation}\label{eq:ssum}
\Big|\frac{1}{|I|}\sum_{n\in I} f(h_{P_{t'}(n^{2-\delta}-t')}p')- \int_{Orb(p)}f(z)d\mu_p(z)\Big|=o_{T\to \infty }(1).
\end{equation}
Note that by Taylor's expansion  for $x^{2-\delta}$ at $t'^{\frac{1}{2-\delta}}$ and the upper bound in \eqref{eq:len}, for $n\in I$,

\begin{equation}\label{eq:tay}
n^{2-\delta}-t'=(2-\delta)t'^{\frac{1-\delta}{2-\delta}}(n-t'^{\frac{1}{2-\delta}})+{\rm O}(t'^{\frac{-\delta}{2-\delta}}|I|^2)=(2-\delta)t'^{\frac{1-\delta}{2-\delta}}(n-t'^{\frac{1}{2-\delta}})+{\rm O}(T^{-\frac{\delta}{10(2-\delta)}}).
\end{equation}
We have the following straightforward lemma:
\begin{lemma}\label{lem:cl2} For any $n\in I$, we have 
$$
|P_{t'}\Big(n^{2-\delta}-t'\Big)-P_{t'}\Big((2-\delta)t'^{\frac{1-\delta}{2-\delta}}(n-t'^{\frac{1}{2-\delta}})\Big)|={\rm O}\Big(T^{-\frac{\delta}{10(2-\delta)}}\Big).
$$
\end{lemma}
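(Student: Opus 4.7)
The plan is to estimate $|P_{t'}(u) - P_{t'}(v)|$ by applying the standard identity $|u^i - v^i|\le i\cdot \max(|u|,|v|)^{i-1}\cdot |u-v|$ to each monomial of $P_{t'}$, where I set $u := n^{2-\delta} - t'$ and $v := (2-\delta)\, t'^{(1-\delta)/(2-\delta)}(n - t'^{1/(2-\delta)})$. The two inputs available are that $|u-v| = {\rm O}(T^{-\delta/(10(2-\delta))})$ by \eqref{eq:tay}, and that $|u| \le T^{1/2-\delta/5}$ from the definition of $I$, which combined with the previous bound also gives $|v| \le 2\,T^{1/2-\delta/5}$.

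Expanding $P_{t'}(u) - P_{t'}(v) = \sum_{i=1}^{d} b_i (u^i - v^i)$ and applying the telescoping identity reduces the problem to controlling $\sum_{i=1}^{d} i\,|b_i|\,\max(|u|,|v|)^{i-1}$. Since $|\bar a - 1|<\epsilon$ and $c\sim T^{-1/2+\delta/10}$, I have $|b_i| = |c|^{i-1}|\bar a|^{i+1} \ll 2^{i}\, T^{(i-1)(-1/2+\delta/10)}$, so that
$$
|b_i|\cdot \max(|u|,|v|)^{i-1}\;\ll\; 2^{2i}\cdot T^{-\delta(i-1)/10}.
$$
The essential mechanism is the cancellation in the exponents of $T$, namely $(-1/2+\delta/10) + (1/2-\delta/5) = -\delta/10 < 0$. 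Since $d = [5\delta^{-1}]+2$ is a constant depending only on $\delta$, the entire sum is $O_\delta(1)$, and multiplying by $|u-v| = {\rm O}(T^{-\delta/(10(2-\delta))})$ gives the desired estimate.

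The lemma is essentially a bookkeeping exercise verifying that the exponent cancellation above is strong enough to absorb the $\delta$-dependent constant coming from the finite degree $d$, so no serious obstacle is expected. The only point to keep in mind is that the implicit constants are allowed to depend on $\delta$, which is harmless since $\delta$ is fixed throughout.
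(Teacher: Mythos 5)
Your proof is correct and is essentially the paper's argument: the paper applies the mean value theorem to get $|P_{t'}(u)-P_{t'}(v)|\le |P'_{t'}(\theta)|\,|u-v|$ and bounds $|P'_{t'}(\theta)|=O(1)$ via the same exponent cancellation $(-1/2+\delta/10)+(1/2-\delta/5)=-\delta/10$, while you reach the identical estimate through the elementary telescoping bound on $u^i-v^i$ applied monomial by monomial. Both hinge on \eqref{eq:tay} for $|u-v|$ and on $|c|^{i-1}(T^{1/2-\delta/5})^{i-1}\le 1$, so the two write-ups are interchangeable.
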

\begin{proof} By the mean value theorem and by \eqref{eq:tay}, for some $\theta\in \Big[n^{2-\delta}-t',(2-\delta)t'^{\frac{1-\delta}{2-\delta}}(n-t'^{\frac{1}{2-\delta}})\Big]$  the LHS is $\leq |P'_{t'}(\theta)|{\rm O}(T^{-\frac{\delta}{10(2-\delta)}})$.
Moreover, again by \eqref{eq:tay}  it follows that $\theta\in  \Big[n^{2-\delta}-t',n^{2-\delta}-t'+1\Big]$. We claim that 
$$
|P'_{t'}(\theta)|={\rm O}(1).
$$
Indeed, by definition $P'_{t'}(\theta)=\sum_{i=1}^d (-1)^{i-1}i c^{i-1}\bar{a}^{i+1}\theta^{i-1}$. Moreover since $n\in I$, $\theta\leq n^{2-\delta}-t'+1\leq T^{1/2-\delta/5}+1$. The bound then follows as $|c|\sim T^{-1/2+\delta/10}$ and $\bar{a}=a+c(t'-t_i)\sim 1$ (since $t'\leq t_i+T^{1/2-\delta/5}$).
\end{proof}
By the above lemma, \eqref{eq:ssum} follows from the following:  
\begin{equation}\label{eq:ssum''}
\Big|\frac{1}{|I|}\sum_{n\in I} f(h_{P_{t'}(z(n,t'))}p')- \int_{Orb(p)}f(z)d\mu_p(z)\Big|=o_{T\to \infty }(1),
\end{equation}
where $z(n,t')=(2-\delta)t'^{\frac{1-\delta}{2-\delta}}(n-t'^{\frac{1}{2-\delta}})$.
We will now proof \eqref{eq:ssum''}. Before we prove it, we need some reductions which brings the analysis from $X$ to $S^1$ (using the fact that the point $p'$ is a periodic point of period $R$).

Let $L_t(\theta)=\theta+t \mod 1$ be the linear flow on the circle $S^1$. Let $\Delta:\{h_tp':0\leq t\leq R\}\to S^1$ be given by $\Delta(p')=0$ and $\Delta(h_tp')=L_{t/R}0$. Set $\tilde{f}:S^1\to \R$,
$\tilde{f}(x)=f(\Delta^{-1}x)$. Then since the map $\Delta$ is equivariant,
$$
\sum_{n\in I}f(h_{P_{t'}(z(n,t'))}p')=\sum_{n\in I}\tilde{f}(L_{\frac{P_{t'}(z(n,t'))}{R}}0).
$$
Since $\tilde{f}$ is a function on the circle it follows that $\tilde{f}(x)=\sum_{n\in \Z} a_ne_n(x)$, where $a_n=\int_{S^1}\tilde{f}(x)e_n(x)dx$. Note that $a_0=\int_{S^1}\tilde{f}(x)dx=\int_{Orb(p)}f(z)d\mu_p(z)$. Moreover since $f$ and $R$ are fixed, it follows that there exists $K=K(f,R,\epsilon)$ such that   $\Big|\tilde{f}(x)-\sum_{|k|<K} a_ke_k(x)\Big|<\epsilon$. 
Therefore, \eqref{eq:ssum''} (and hence also Proposition \ref{prop:equi}) follows immediately by the following lemma:

\begin{lemma}\label{lem:las}  For every $0 \neq |k|<K$, we have 
\begin{equation}\label{eq:ssum'}
\Big|\frac{1}{|I|}\sum_{n\in I}e\Big(\frac{k}{R}\cdot P_{t'}(z(n,t'))\Big) \Big|\leq \frac{\epsilon}{2K}.
\end{equation}
where $z(n,t')=(2-\delta)t'^{\frac{1-\delta}{2-\delta}}(n-t'^{\frac{1}{2-\delta}})$.
\end{lemma}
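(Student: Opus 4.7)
The plan is to rewrite the phase in \eqref{eq:ssum'} as a polynomial exponential sum in $n$ of fixed degree $d=\lfloor 5/\delta\rfloor+2$ and invoke Weyl's inequality, leveraging the freedom in the choice of $t'$ to secure the required Diophantine condition. With $M := t'^{1/(2-\delta)}\in\N$ and $\beta := (2-\delta)\,t'^{(1-\delta)/(2-\delta)}$, the substitution $m=n-M$ recasts the phase as
$$
\frac{k}{R}\, P_{t'}(z(n,t')) \;=\; \sum_{i=1}^{d} \gamma_i^{(k)}\, m^i, \qquad \gamma_i^{(k)} := \frac{k}{R}\, b_i\, \beta^i,
$$
with leading coefficient $\gamma_d^{(k)} = (k/R)(-1)^{d-1} c^{d-1} \bar a^{d+1} (2-\delta)^d\, t'^{d(1-\delta)/(2-\delta)}$, and the sum in question becomes $\sum_{m=0}^{N-1} e\bigl(\sum_{i=1}^{d} \gamma_i^{(k)} m^i\bigr)$ with $N := |I| \asymp T^{(\delta+2\delta^2)/(10(2-\delta))}$ by \eqref{eq:len}.

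Next, I would apply Weyl's inequality: there exists a fixed $\sigma=\sigma(d)>0$ such that, provided one of the non-constant coefficients $\gamma_i^{(k)}$ admits a rational approximation $|\gamma_i^{(k)}-a/q|\le 1/q^2$ with $(a,q)=1$ and $q\in [N^{\eta}, N^{d-\eta}]$ for a suitable small fixed $\eta>0$, one obtains $\bigl|\sum_{m=0}^{N-1} e(\sum_{i=1}^d \gamma_i^{(k)} m^i)\bigr| \ll N^{1-\sigma\eta}$. Since $K=K(f,R,\epsilon)$ is independent of $T$, any such $o(N)$-estimate yields the desired bound $\le \epsilon/(2K)$ for $T$ large enough. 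Thus the proof reduces to producing a single admissible integer $M$ (with $M^{2-\delta}\in[t_i, t_i+T^{1/2-\delta/5}]$) for which this Diophantine condition is met simultaneously for all $0\ne|k|<K$.

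For this I would use the freedom in the choice of $M$ out of the $\asymp T^{(\delta+2\delta^2)/(10(2-\delta))}/\log T$ admissible integers. Computing $d\gamma_i^{(k)}/dt'$ explicitly (using $\bar a(t')=a+c(t'-t_i)$ and differentiating the power of $t'$) and noting that consecutive admissible $t'(M)=M^{2-\delta}$ are spaced by $\asymp T^{(1-\delta)/(2-\delta)}$, i.e.\ a positive power of $T$, one verifies that the sequence $\bigl(\gamma_i^{(k)}(t'(M))\bigr)_M \bmod 1$ is well-distributed. Combined with the standard fact that the set of $\gamma\in[0,1]$ lacking any rational approximation in the window $[N^\eta, N^{d-\eta}]$ has Lebesgue measure $\ll N^{2\eta-d}$, this shows that all but an $o(1)$-fraction of admissible $M$ satisfy the Diophantine condition for a given $k$. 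A union bound over the finitely many $k$ with $0\ne|k|<K$ then produces a common good $M$, completing the proof.

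The main obstacle is the Diophantine bookkeeping. Since $d\asymp 1/\delta$ is large and Weyl's inequality loses a factor $2^{d-1}$ in the exponent, the saving $\sigma\eta$ is tiny and the window $[N^\eta, N^{d-\eta}]$ must be chosen delicately. Moreover, the sizes of the coefficients $\gamma_i^{(k)}$ range from roughly $T^{-1/4}$ at $i=d$ to $T^{1/2-\delta/4}$ at $i=1$, so that the phase contribution of the leading term $|\gamma_d^{(k)}| N^d$ is itself of borderline size $T^{O(\delta)}$. Consequently, depending on which coefficient one targets, the verification of the Diophantine condition is subtle and may require performing Weyl differencing relative to a sub-leading coefficient whose variation with $t'$ is more pronounced than that of $\gamma_d^{(k)}$.
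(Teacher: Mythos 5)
There is a genuine gap. Your plan hinges on verifying a Diophantine hypothesis for the classical form of Weyl's inequality (some coefficient $\gamma_i^{(k)}$ admitting a rational approximation with denominator in a window $[N^{\eta},N^{d-\eta}]$), and you propose to secure it by varying $M$ and invoking ``well-distribution'' of $\gamma_i^{(k)}(t'(M))\bmod 1$ together with a measure bound and a union bound over $k$. The well-distribution claim is never proved, and it is not a routine fact: the map $M\mapsto \gamma_i^{(k)}(M^{2-\delta})\bmod 1$ over the short range of admissible $M$ is itself an equidistribution problem of the same flavour as the lemma you are trying to prove, so this step is both a real gap and a potential circularity. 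There is also a structural problem with anchoring the argument on the leading (or a generic) coefficient: with $c\sim T^{-1/2+\delta/10}$, $|I|\asymp T^{\frac{\delta+2\delta^2}{10(2-\delta)}}$ and $d=[5\delta^{-1}]+2$, one has $|\gamma_i^{(k)}|\,|I|^i\asymp T^{\frac{10-7\delta+\delta^2-i\delta(2-\delta)}{10(2-\delta)}}$, which is $o(1)$ for $i$ near $d$; the top-degree terms contribute negligible total phase, so classical Weyl differencing keyed to the leading coefficient gives nothing, and your proposed window condition cannot be met there (the only good approximation is $0/1$). Your closing remark that one may need to work ``relative to a sub-leading coefficient'' points at the right issue, but the proposal does not supply the mechanism.

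The paper's proof avoids all of this and is deterministic in $t'$: since the quantities $|\tilde b_i||I|^i$ decrease by a fixed factor (about $T^{-\delta/10}$) as $i$ increases, starting near $T^{1/2}$ and ending below $1$, a pigeonhole in the exponent produces an index $2\le i_0\le d$ with $C^{-1}(\log T)^{-i_0}T^{\eta}<|\tilde b_{i_0}||I|^{i_0}<CT^{\eta}$ for some small $\eta=\eta(\delta)>0$ (this is \eqref{eq:gold}). Because $|\tilde b_{i_0}|$ is then small enough that $|q\tfrac kR\tilde b_{i_0}|<1/3$ for all $q\le\log\log T$, $|k|<K$, the distance to the nearest integer equals the absolute value, hence $\|q\tfrac kR\tilde b_{i_0}\|\ge(\log T)|I|^{-i_0}$; this contradicts the conclusion of the Green--Tao form of Weyl's inequality (Lemma \ref{lem:Weyl}), which, if the sum were $\ge\xi|I|$, would force \emph{every} coefficient, in particular the $i_0$-th, to satisfy $\|q\alpha_{i_0}\|\le\xi^{-O_d(1)}|I|^{-i_0}$ for some $q\le\xi^{-O_d(1)}$. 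That ``all coefficients simultaneously'' dichotomy is exactly what lets one exploit an intermediate coefficient, and it renders any averaging or choice of a good $M$ unnecessary. To repair your argument you would either have to prove the equidistribution-in-$M$ claim, or (much simpler) replace the classical Weyl statement by Lemma \ref{lem:Weyl} and verify the size condition on the $i_0$-th coefficient directly, which is the paper's route.
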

\begin{proof}[Proof of Lemma \ref{lem:las}] Notice that by the definition of $z(n,t')$ and $P_{t'}$, 
$$
\frac{k}{R}P_{t'}(z(n,t'))=\sum_{i=1}^{d} (-1)^{i-1}\Big((2-\delta)^i\frac{k}{R} c^{i-1}\bar{a}^{i+1}t'^{\frac{1-\delta}{2-\delta}i}\Big)(n-t'^{\frac{1}{2-\delta}})^i.
$$
 To simplify notation let  $\tilde{b}_{i}=(-1)^{i-1}\Big((2-\delta)^{i}c^{i-1}\bar{a}^{i+1}t'^{\frac{1-\delta}{2-\delta}i}\Big)$. Recall that \\$P_{t'}(\ell)=\sum_{i=1}^d (-1)^{i-1}c^{i-1}\bar{a}^{i+1}(\ell-t')^i$ for $\ell \in [t',t'+T^{1/2-\delta/5}]$. We claim that there exists $\eta=\eta(\delta)\in (0,\frac{\delta(2-\delta)}{10(2-\delta)}]$ and $2\leq i_0\leq d$ such that 
\begin{equation}\label{eq:gold}
C^{-1}(\log T)^{-i_0}T^{\eta}< |\tilde{b}_{i_0}| |I|^{i_0}<CT^{\eta}.
\end{equation}
Before we show \eqref{eq:gold}, let us show how to finish the proof using it. Notice in particular that \eqref{eq:gold} and \eqref{eq:len} (and the bound on $\eta$) imply that $|\tilde{b}_{i_0}| \ll T^{-\frac{\delta^2}{10(2-\delta)}}(\log T)^{i_0}$. Therefore if $T$ is sufficiently large, then for every $0<q\leq \log \log T$, every $|k|\leq K$ and every $R\leq \log T$
\begin{equation}\label{eq:ee}
\Big\|q\frac{k}{R}\tilde{b}_{i_0}\Big\|\geq  (\log T) |I|^{-i_0}
\end{equation}
Indeed, notice that by the bound on $|\tilde{b}_{i_0}|$ and $q,K,R$ it follows that $|q\frac{k}{R}\tilde{b}_{i_0}|<1/3$ and so 
$\Big\|q\frac{k}{R}\tilde{b}_{i_0}\Big\|= |q\frac{k}{R}\tilde{b}_{i_0}|$. By then by the lower bound in \eqref{eq:gold} it follows that 

$$\Big\|q\frac{k}{R}\tilde{b}_{i_0}\Big\|\geq \frac{C^{-1}(\log T)^{-i_0}T^{\eta}}{\log T}|I|^{-i_0}\geq (\log T) |I|^{-i_0}.$$ 
This gives \eqref{eq:ee}. Notice that then \eqref{eq:ssum'} follows from the Weyl bound, i.e. Lemma \ref{lem:Weyl}) for the polynomial $\frac{k}{R}\cdot P_{t'}(z(n,t'))$ where we use \eqref{eq:ee} to bound the $i_0$ coefficient. Here we pick  $T$ sufficiently large in terms of $K, \epsilon,d$ so that, say, $\log \log \log T>(\frac{\epsilon}{K})^{-O_d(1)}$. This finishes the proof.

So it only remains to prove \eqref{eq:gold}.
\begin{proof}[Proof of \eqref{eq:gold}]
Note that  by the bounds on $\bar{a}$, we have that there is $C'=C'(\delta,d)$ such that for every $i\leq d$,  
$\frac{1}{C'}c^{i-1} t'^{\frac{1-\delta}{2-\delta}i}\leq  |\tilde{b}_{i_0}|\leq C'|c|^{i-1} t'^{\frac{1-\delta}{2-\delta}i}$. Therefore it is enough to show that 
there is $i_0\leq d$ for which 
$$
C^{-1}(\log T)^{-i_0}T^{\eta}< c^{i_0-1} t'^{\frac{1-\delta}{2-\delta}i_0} |I|^{i_0}<CT^{\eta}
$$

Since $c\sim T^{-1/2+\delta/10}$ and $t'\in [T/2, T \log T]$ it follows by the lower bound in \eqref{eq:len} that for every $i\leq d$.
$$
c^{i-1} t'^{\frac{1-\delta}{2-\delta}i} |I|^{i}\gg T^{(-1/2+\delta/10)(i-1)} \cdot T^{\frac{1-\delta}{2-\delta}i} \cdot T^{(\frac{\delta+2\delta^2}{10(2-\delta)})i}(\log T)^{-i}= 
$$$$
(\log T)^{-i} \cdot T^{\frac{10-2\delta i+\delta^2 i-7\delta+\delta^2}{10(2-\delta)}}.
$$
Analogously, by the upper bound in \eqref{eq:len},
$$
c^{i-1} t'^{\frac{1-\delta}{2-\delta}i} |I|^{i}\ll T^{\frac{10-2\delta i+\delta^2 i-7\delta+\delta^2}{10(2-\delta)}}.
$$
Assume $\frac{10-7\delta+\delta^2}{\delta(2-\delta)}$ is not an integer. Then define $i_0:=\Big[\frac{10-7\delta+\delta^2}{\delta(2-\delta)}\Big]\geq 2$, and $\eta:= \frac{10-2\delta i_0+\delta^2 i_0-7\delta+\delta^2}{10(2-\delta)}$ . Note that 
$$
10-7\delta+\delta^2-i_0\delta(2-\delta)>0 
$$
and also
$$
10-7\delta+\delta^2-i_0\delta(2-\delta) \leq 10-7\delta+\delta^2-\Big(\frac{10-7\delta+\delta^2}{\delta(2-\delta)}-1\Big)\delta(2-\delta)= \delta(2-\delta),
$$
and so $\eta\in (0, \frac{\delta(2-\delta)}{10(2-\delta)}]$.
If  $\frac{10-7\delta+\delta^2}{\delta(2-\delta)}$ is an integer, we define $i_0:=\frac{10-7\delta+\delta^2}{\delta(2-\delta)}-1$ and $\eta:= \frac{10-2\delta i_0+\delta^2 i_0-7\delta+\delta^2}{10(2-\delta)}$. Note that in this case
$10-7\delta+\delta^2-i_0\delta(2-\delta)=\delta(2-\delta)$ and so $\eta=\frac{\delta(2-\delta)}{10(2-\delta)}$. In both cases we get that $\eta$ satisfies the bounds and so this finishes the proof of \eqref{eq:gold}.
This finishes the proof.
\end{proof}

\end{proof}

The following reformulation of Weyl's inequality can be found in \cite{GT}, Proposition 4.3:
\begin{lemma}\label{lem:Weyl} Let $g(n)=\sum_{i=1}^D \alpha_in^i$ be a polynomial of degree $D$.  If
$$
\Big|\sum_{n\in I}e(g(n-a))\Big|\geq \xi |I|
$$
for some integer interval $I=[a,b]$ and some $\xi\in (0,1/2)$, then there exists a positive integer $q\leq \xi^{-O_D(1)}$ such that 
$$
\|q\alpha_j\|\leq \xi^{-O_D(1)} |I|^{-j},
$$
for all $1\leq j\leq D$.
\end{lemma}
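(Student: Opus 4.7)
My plan is to prove this by the classical Weyl differencing method, proceeding by induction on the degree $D$ of the polynomial.

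\textbf{Base case and setup.} For $D=1$ the statement is just the elementary bound $|\sum_{n \in I} e(\alpha_1 n + \alpha_0)| \ll \min(|I|, \|\alpha_1\|^{-1})$, so if the sum is at least $\xi |I|$ then $\|\alpha_1\| \ll \xi^{-1} |I|^{-1}$, which gives the desired approximation with $q = 1$. For the inductive step, the main tool is the van der Corput / Weyl differencing inequality
\[
\Bigl|\sum_{n \in I} e(g(n))\Bigr|^{2^{D-1}} \ll |I|^{2^{D-1}-D}\!\!\sum_{|h_1|, \ldots, |h_{D-1}| \leq |I|} \Bigl|\sum_{n \in I_{\mathbf h}} e(\Delta_{h_1}\cdots \Delta_{h_{D-1}} g(n))\Bigr|,
\]
where $\Delta_h g(n) = g(n+h) - g(n)$ and $I_{\mathbf h}$ is a sub-interval of $I$. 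The $(D-1)$-fold iterated difference is a linear polynomial in $n$ whose leading coefficient is $D!\,\alpha_D h_1 \cdots h_{D-1}$.

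\textbf{Extraction of the leading coefficient.} If $|\sum_{n \in I} e(g(n-a))| \geq \xi |I|$, then by the differencing inequality above together with the elementary bound on linear sums, a lot of tuples $(h_1,\ldots,h_{D-1})$ must satisfy $\|D!\,\alpha_D h_1 \cdots h_{D-1}\| \ll \xi^{-O_D(1)} |I|^{-1}$. Applying a standard lemma (a Vinogradov/Heath-Brown recovery lemma, or iterating Dirichlet's theorem through each $h_i$ separately), one concludes that there exists $q_D \leq \xi^{-O_D(1)}$ with
\[
\| q_D \alpha_D \| \leq \xi^{-O_D(1)} |I|^{-D}.
\]

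\textbf{Reduction to lower degree and induction.} Write $q_D \alpha_D = p_D + \theta_D$ with $|\theta_D| \leq \xi^{-O_D(1)}|I|^{-D}$. Split $I$ into arithmetic progressions modulo $q_D$ of length roughly $|I|/q_D$; on each such progression $n = q_D m + r$, the polynomial $g(n-a)$ differs from
\[
\tilde g_r(m) := g(q_D m + r - a) - \frac{p_D}{q_D}(q_D m + r - a)^D
\]
by a function whose contribution to $e(\cdot)$ is negligible on account of the smallness of $\theta_D |I|^D$. The new polynomial $\tilde g_r$ has degree $\leq D-1$ in $m$, and by pigeonhole at least one progression supports an exponential sum of size $\gg \xi^{O_D(1)} |I|/q_D$ in absolute value. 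Applying the inductive hypothesis in degree $D-1$ yields a $q_r \leq \xi^{-O_D(1)}$ simultaneously approximating all coefficients of $\tilde g_r$. Expanding $\tilde g_r$ in powers of $m$ and unwinding back to $\alpha_1, \ldots, \alpha_{D-1}$, the common denominator $q := \mathrm{lcm}(q_D, q_r) \leq \xi^{-O_D(1)}$ satisfies $\|q \alpha_j\| \leq \xi^{-O_D(1)} |I|^{-j}$ for each $1 \leq j \leq D$, as required.

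\textbf{Main obstacle.} The conceptual steps are standard, but the real work is in the bookkeeping of the polynomial losses in $\xi^{-1}$: each differencing step multiplies the saving by a power of $2$, and the change of variables from $\tilde g_r$ back to $g$ introduces binomial factors $\binom{D}{j}q_D^j$ that must be absorbed into the final denominator $q$ without losing polynomial dependence in $\xi$. Getting $O_D(1)$ uniform in all $1 \leq j \leq D$ simultaneously is the delicate part, and is precisely what the formulation in \cite{GT} packages cleanly.
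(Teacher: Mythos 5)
The paper does not actually prove this lemma: it is quoted verbatim as Proposition 4.3 of \cite{GT}, so there is no in-paper argument to compare against. Your sketch is the standard route behind that citation -- Weyl differencing to pin down the leading coefficient, a Vinogradov-type recovery lemma to produce a single small denominator $q_D$ with $\|q_D\alpha_D\|\leq \xi^{-O_D(1)}|I|^{-D}$, then splitting into progressions mod $q_D$ and inducting on the degree -- and that is the right skeleton, essentially what Green--Tao do in the abelian case.

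Two points in your write-up are not yet proofs, and one of them is stated incorrectly. First, in the reduction step you discard the phase $(\theta_D/q_D)(n-a)^D$ ``on account of the smallness of $\theta_D|I|^D$''; but $|\theta_D|\,|I|^D$ is only bounded by $\xi^{-O_D(1)}$, which is large, not small, so this phase is not negligible as written. The standard fix is to subdivide each progression further into $\xi^{-O_D(1)}$ shorter pieces, on each of which the drifting phase varies by $o(1)$, and pigeonhole again; this costs only another polynomial factor in $\xi^{-1}$ but must be done, and it also changes the scale $|I|/q_D$ at which the inductive hypothesis is applied, which has to be tracked when you unwind $\|q\alpha_j\|\leq \xi^{-O_D(1)}|I|^{-j}$. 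Second, the ``extraction'' step -- passing from many tuples $(h_1,\dots,h_{D-1})$ with $\|D!\,\alpha_D h_1\cdots h_{D-1}\|$ small to a single $q_D\leq\xi^{-O_D(1)}$ -- is exactly the delicate point: the classical divisor-counting argument loses factors $|I|^{\varepsilon}$, which the statement does not permit, so you must genuinely run the Vinogradov/Green--Tao recovery lemma iteratively in each variable $h_i$, not merely invoke ``a standard lemma''. You acknowledge in your final paragraph that this bookkeeping is what \cite{GT} packages; since the lemma is a known quoted result that is a reasonable position, but as a self-contained proof the proposal is a correct outline with these two steps still open rather than a complete argument.
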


\section{Proof of Theorem \ref{thm:main'}}
In this section we will prove Theorem \ref{thm:main'}.  To do this we will show that for every $x\in X$ which is non-periodic, there is a sequence of intervals $I_i$ (with length going to $+\infty$), such that $\sum_{p\in I_i} f(h_px)=|I_i|\mu(f)(1+o(1))$. This is sufficient to deduce density by taking $f$ to be characteristic functions of balls. We will assume that $\mu(f)=0$.
We will need the following input from number theory.

\begin{proposition}\label{thm:EH}Fix $\alpha\in (0,1/2)$. Assume the Hardy-Littlewood conjecture in the form $\mathcal{C}_{10^6}(N^{1/50}, N)$ stated in section \ref{se:numbertheory}. For every $T>0$, every $H\sim T^{\alpha/30}$ and every $q\sim H^{1/4}$ the following holds: for all but a set $T^{1-\alpha/5-o(1)}$ of $x\in [0,T\log T]$ and for all but $o(q)$ of $a\in [0,q]\cap \Z$, we have 
\begin{equation}\label{eq:canc}
\sum_{n\in [x,x+H]}e\Big(n \frac{a}{q}\Big)\Lambda(n)=o(H).
\end{equation}
\end{proposition}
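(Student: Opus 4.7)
The plan is to apply a higher-moment variance argument combined with the Hardy--Littlewood conjecture $\mathcal{C}_{10^6}(N^{1/50}, N)$ (with $N := T \log T$) to control the $\Lambda$-correlations, and then conclude via Markov's inequality applied twice, in the $x$- and in the $a$-variable. Set
$$
S(x, a) := \sum_{n \in [x, x + H]} e(na/q) \Lambda(n).
$$

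I would first estimate the averaged $2k$-th moment
$$
V_k := \sum_{x \in [0, N] \cap \mathbb{Z}} \sum_{a = 0}^{q - 1} |S(x, a)|^{2k}
$$
for a fixed integer $k$ to be chosen later. Expanding the $2k$-th power, the sum over $a$ modulo $q$ enforces the congruence $n_1 - n_2 + \cdots + n_{2k-1} - n_{2k} \equiv 0 \pmod q$ (with weight $q$, else $0$), while the sum over $x$ produces a kernel of size $\asymp H^{2k - 1}$ counting those $x$ for which all the $n_i$ lie in $[x, x + H]$. The Hardy--Littlewood conjecture $\mathcal{C}_{10^6}(N^{1/50}, N)$ applies because $H = T^{\alpha/30} \leq T^{1/60} \ll N^{1/50}$ (using $\alpha < 1/2$); it replaces each arising $\Lambda$-correlation
$$
\sum_{n \leq N} \Lambda(n + h_1) \cdots \Lambda(n + h_r)
$$
by its singular-series main term $\mathfrak{S}(h_1, \ldots, h_r)\, N$ up to an error $\ll N (\log N)^{-10^6}$. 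The dominant contribution to $V_k$ comes from the paired diagonal configurations in which the $2k$ indices split into $k$ close pairs with pair-differences $h_j$ divisible by $q$ and $|h_j| \leq H$; combined with the standard singular-series estimate $\sum_{0 < |h| \leq H,\, q \mid h} \mathfrak{S}(h) \ll H/q$, this gives the target bound
$$
V_k \ll N H^{2k}/q^{k - 1}.
$$

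Markov's inequality then yields, for any $\eta > 0$, that the bad set $\mathcal{B}_\eta := \{(x, a) : |S(x, a)| > \eta H\}$ satisfies $|\mathcal{B}_\eta| \leq V_k / (\eta H)^{2k} \ll N / (\eta^{2k} q^{k - 1})$. A second Markov in the $a$-variable shows that the set of $a \in [0, q] \cap \mathbb{Z}$ with $\#\{x : (x, a) \in \mathcal{B}_\eta\} > T^{1 - \alpha/5 - o(1)}$ has cardinality at most $N \cdot T^{-(1 - \alpha/5 - o(1))}/(\eta^{2k} q^{k - 1})$. Since $q \sim T^{\alpha/120}$, choosing a fixed $k$ with $k - 1 > 24$ (for instance $k = 26$) and $\eta \to 0$ sufficiently slowly makes this bound $o(q)$, yielding the claim.

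The main obstacle is proving the estimate $V_k \ll N H^{2k}/q^{k-1}$ from the Hardy--Littlewood conjecture: one has to separate the paired diagonal configurations (giving the main term $H^{2k}/q^{k-1}$ from the $k$ independent pair-shifts each constrained modulo $q$) from off-diagonal configurations where the $n_i$'s cluster differently, and to verify that the latter either reduce combinatorially to the paired case or are absorbed by the $(\log N)^{-10^6}$ saving in the conjecture. Extracting the geometric factor $q^{-(k-1)}$ requires a careful local analysis of the singular series $\mathfrak{S}(h_1, \ldots, h_{2k-1})$ at the primes dividing $q$, where the simultaneous divisibility of the shifts by $q$ produces the needed density decrease.
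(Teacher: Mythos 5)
Your overall strategy (a $2k$-th moment bound plus Chebyshev/Markov in $x$ and then in $a$) is in the right family, and indeed the paper also proceeds via a $2k$-th moment and Chebyshev. But the central estimate you rely on is false as stated, and the way you propose to obtain it bypasses exactly the hard step. You define $V_k=\sum_{x\le N}\sum_{a=0}^{q-1}|S(x,a)|^{2k}$ and claim $V_k\ll NH^{2k}q^{-(k-1)}$. The term $a=0$ alone gives $\sum_x\bigl(\sum_{n\in[x,x+H]}\Lambda(n)\bigr)^{2k}\gg NH^{2k}$ (and for composite $q$ the non-coprime residues cause the same problem), so no bound with any power saving in $q$ can hold for the full average over $a\bmod q$. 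If you instead restrict to $(a,q)=1$, the exact orthogonality you invoke disappears: the $a$-sum produces Ramanujan sums, and for $q$ prime one gets
\begin{equation*}
\sum_{(a,q)=1}|S(x,a)|^{2k}\;=\;q\!\!\sum_{\substack{n_1,\dots,n_{2k}\in[x,x+H]\\ q\mid n_1+\cdots+n_k-n_{k+1}-\cdots-n_{2k}}}\!\!\prod_i\Lambda(n_i)\;-\;\Bigl(\sum_{n\in[x,x+H]}\Lambda(n)\Bigr)^{2k},
\end{equation*}
two quantities of size $\asymp H^{2k}$ whose difference you need to show is $\ll H^{2k}q^{-(k-1)}$ on average over $x$. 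That forces you to evaluate the Hardy--Littlewood main terms (sums of singular series over shift configurations constrained mod $q$) to relative accuracy $q^{-(k-1)}$; your appeal to ``paired diagonal configurations'' plus the two-point bound $\sum_{q\mid h}\mathfrak{S}(h)\ll H/q$ does not do this, because the off-diagonal singular series do not vanish and their total contribution is exactly what must be cancelled. Your closing paragraph acknowledges this as ``the main obstacle,'' but it is not an obstacle left to routine verification --- it is the entire content of the proof, and as written the argument has a genuine gap.

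For comparison, the paper avoids the $a$-average altogether: for each fixed $a$ with $(a,q)=1$ it bounds $\sum_{x\le N}|S(x,a)|^{2k}\ll NH^{2k+\varepsilon}q^{-k/2}+N^{1/2+\varepsilon}H^{2k}$ by expanding the moment, sorting the shifts by multiplicity, applying $\mathcal{C}_{2k}(H,N)$ to the distinct-shift correlations, and then extracting the saving from the oscillation of $e\bigl(\frac aq\sum_i u_ih_i\bigr)$ against $\mathfrak{S}(\mathcal{H})$; this uses the Montgomery--Soundararajan expansion of the singular series into terms $A_{\mathcal H}(q_1,\dots,q_r)$ together with a Montgomery--Vaughan-type lemma, yielding a factor $(H/\sqrt q)^{\ell}H^{r-\ell}$, where $\ell$ counts the unrepeated shifts. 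Chebyshev then gives a small exceptional set of $x$ for every coprime $a$ simultaneously (a saving of $q^{-K/2}$, which is more than enough with $q\sim T^{\alpha/120}$ and $K$ large). If you want to salvage your route, you would either have to carry out the precise singular-series cancellation described above, or simply sum the paper-style fixed-$a$ bound over the $\phi(q)$ coprime residues, which already suffices for your Markov argument with a slightly larger $k$.
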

\begin{proof}
See section \ref{se:numbertheory}.
\end{proof}

For a fixed $x\in X$ and $\epsilon$ we will consider the sequence $\{T_i\}$ of times constructed in Proposition \ref{prop:prime}. For $T=T_i$ large enough and for $H'=\epsilon^{-2}H$ let $w$ be a periodic point of period $R=q/a$, where $a<q$ is such that $a, 2a,3a,...K(\epsilon)a$ are all good for the above theorem and $q/a\sim \epsilon^{-10}$ (existence of such $a$ follows from Pigeonhole principle). Consider the orbit $\{h_tx\}_{t\in [0,T\log T]}$ and let $G$ be the set coming from Proposition \ref{prop:prime}. Then it follows that for every $\bar{t}\in G$ we have that \eqref{eq:prime} holds. Then applying Lemma \ref{lem:prime} to  $h_{\bar{t}}x$ and $w$ it follows that  for every $t\in [\bar{t}, \bar{t}+H]$, $d(h_{t}x, h_{t-\bar{t}}w)<\epsilon$. In particular, 
$$
\sum_{n\in [\bar{t}, \bar{t}+H]}f(h_nx)\Lambda(n)=\sum_{n\in [\bar{t}, \bar{t}+H]}f(h_{n-\bar{t}}w)\Lambda(n).
$$
Now the proof is similar to the corresponding proof for polynomials $\{n^{2-\delta}\}$. We provide it here for completeness. 
Let $L_t(\theta)=\theta+t \mod 1$ be the linear flow on the circle $S^1$. Let $\Delta:\{h_tw:0\leq t\leq R\}\to S^1$ be given by $\Delta(w)=0$ and $\Delta(h_tw)=L_{t/R}0$. Set $\tilde{f}:S^1\to \R$,
$\tilde{f}(x)=f(\Delta^{-1}x)$. Then since the map $\Delta$ is equivariant,
$$
\sum_{n\in [\bar{t}, \bar{t}+H]}f(h_{n-\bar{t}}w)\Lambda(n)=\sum_{n\in [\bar{t}, \bar{t}+H]}\tilde{f}(L_{\frac{n-\bar{t}}{R}}0).
$$
Since $\tilde{f}$ is a function on the circle it follows that $\tilde{f}(x)=\sum_{n\in \Z} a_ne_n(x)$, where $a_n=\int_{S^1}\tilde{f}(x)e_n(x)dx$. Note that $a_0=\int_{S^1}\tilde{f}(x)dx=\int_{Orb(w)}f(z)d\mu_w(z)$, and if the period of $w$ goes to $\infty$, then $a_0=o(1)$.  Moreover since $f$ and $R$ are fixed, it follows that there exists $K=K(\epsilon)$ such that   $\Big|\tilde{f}(x)-\sum_{|k|<K} a_ke_k(x)\Big|<\epsilon$. Therefore to get that $\sum_{n\in [\bar{t}, \bar{t}+H]}f(h_{n-\bar{t}}w)\Lambda(n)=o(H)$, it is enough to show that 
$$
\sum_{n\in [\bar{t}, \bar{t}+H]}e(n\frac{ak}{q})\Lambda(n)= o(\frac{H}{K}).
$$
Note that $|G|>T^{1-\alpha/5}.$. Therefore there exists a $\bar{t}\in G$ such that $\bar{t}$ is  {\bf not} in the exceptional set from Theorem \ref{thm:EH}. In particular, by the choice of $a$ we get that \eqref{eq:canc} holds. This finishes the proof.

\section{Cancellation of short sums over primes} \label{se:numbertheory}

We make the following Hardy-Littlewood conjecture.
\begin{conjecture}
  We call $\mathcal{C}_{K}(H, N)$ the conjecture that uniformly in $1 \leq k \leq K$, $0 \leq x \leq N$ and distinct
  $1 \leq h_{i} \leq H$ we have,
  $$
  \sum_{n \leq x} \prod_{i = 1}^{k} \mathbf{1}_{n + h_{i} \in \mathbb{P}} = \mathfrak{S}(\mathcal{H}) \int_{1}^{x} \frac{dt}{(\log t)^{k}} + O_{K, \varepsilon} (N^{1/2 + \varepsilon}).
  $$
  where $\mathbb{P}$ denotes the set of prime numbers,
  $\mathcal{H} = \{h_{1}, \ldots, h_{k}\}$ and
  $$
  \mathfrak{S}(\mathcal{H}) = \prod_{p} \Big ( 1 - \frac{1}{p} \Big )^{-k} \Big ( 1 - \frac{\nu_{p}(\mathcal{H})}{p} \Big )
  $$
  and where $\nu_{p}(\mathcal{H})$ denotes the number of distinct residue classes modulo $p$ found in the set $\mathcal{H}$.
\end{conjecture}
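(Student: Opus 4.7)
The conjecture $\mathcal{C}_K(H, N)$ is the quantitative Hardy--Littlewood prime $k$-tuples conjecture with a power-saving error of size $O(N^{1/2+\varepsilon})$. For every fixed $k \geq 2$ this is a celebrated open problem --- the case $k = 2$ already encompasses a strong form of the twin-prime conjecture --- so any honest proof proposal is a blueprint in the spirit of the classical Hardy--Littlewood programme rather than a realisable proof with present-day tools.

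The natural plan is the circle method. First I would express $\prod_{i = 1}^k \mathbf{1}_{n + h_i \in \mathbb{P}}$ as $\prod_i \Lambda(n + h_i)/\log(n+h_i)$ up to negligible prime-power contributions, then open each factor via the Fourier representation of the truncated prime-counting sum and decompose the torus $[0,1]$ into major arcs around rationals $a/q$ with $q \leq Q$ for some $Q = N^\theta$, and the complementary minor arcs. On the major arcs I would invoke Siegel--Walfisz to approximate $\sum_{n \leq N} \Lambda(n) e(n a/q)$ by $(\mu(q)/\phi(q))\, N$; a local computation combining Ramanujan sums with the distribution of $\mathcal{H}$ modulo $p$ produces exactly the singular series $\mathfrak{S}(\mathcal{H})$ together with the asserted main term $\mathfrak{S}(\mathcal{H}) \int_1^x dt/(\log t)^k$. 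The minor arcs would then have to be controlled by establishing cancellation in the product of exponential sums $\prod_i \sum_n \Lambda(n + h_i) e(n \alpha)$, and to recover the claimed $O(N^{1/2+\varepsilon})$ remainder one would need essentially square-root cancellation off the major arcs.

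An alternative route is a Selberg / Maynard higher-dimensional sieve: construct a weight $\nu_R(n) = \bigl( \sum_{d_i \mid n + h_i} \lambda_{d_1, \ldots, d_k} \bigr)^2$ with sieve level $R = N^\theta$, tuned so that $\sum_{n \leq x} \nu_R(n) \prod_i \Lambda(n + h_i)$ is computable in closed form; the leading term converges to $\mathfrak{S}(\mathcal{H})\, x/(\log x)^k$ as $\theta$ approaches its critical value, while the error depends on the level of distribution of primes in arithmetic progressions modulo $d \leq R^2$. Pushing $\theta$ to $1/2$ would need Bombieri--Vinogradov, and securing the power-saving remainder $N^{1/2+\varepsilon}$ demands Elliott--Halberstam or stronger.

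The genuine obstacle, in either approach, is the parity problem of sieve theory, compounded by the absence of unconditional square-root cancellation for prime exponential sums on minor arcs. Every current method hits this wall: Zhang--Maynard--Tao bypass it to produce existence results (bounded gaps) without isolating the constant $\mathfrak{S}(\mathcal{H})$; Matom\"aki--Radziwi\l\l\ techniques give averaged rather than pointwise asymptotics; and classical sieves recover only upper bounds of the correct order of magnitude. Establishing $\mathcal{C}_{10^6}(N^{1/50}, N)$ with the claimed remainder therefore appears to require a fundamentally new analytic input --- which is precisely why the present paper adopts it as a hypothesis rather than attempting to prove it.
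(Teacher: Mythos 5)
You have correctly identified that $\mathcal{C}_K(H,N)$ is a conjecture, not a theorem: the paper offers no proof of it and instead assumes it as the hypothesis underlying the density result for prime orbits, remarking only that for $H \leq N^{1/2}$ it is equivalent to the uniform Hardy--Littlewood $k$-tuple conjecture with von Mangoldt weights as enunciated by Montgomery and Soundararajan, which lends it support in the literature. Your sketch of the circle-method and sieve-theoretic obstructions (minor-arc cancellation, the parity problem) is accurate, and your conclusion --- that the statement must be taken as a hypothesis rather than proved --- is exactly the stance the paper takes.
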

For $H \leq N^{1/2}$ the conjecture $\mathcal{C}_{K}(H, N)$ is equivalent to the Hardy-Littlewood $k$-tuplet conjecture with von Mangoldt weight,
$$
\sum_{n \leq x} \prod_{i = 1}^{k} \Lambda(n + h_{i}) = \mathfrak{S}(\mathcal{H}) x + O(x^{{1/2 + \varepsilon}}).
$$
This version of the uniform Hardy-Littlewood conjecture is enunciated in \cite{SoundMontgomery}. This is just to say that conjectures such as $\mathcal{C}_{K}(H, N)$ with $H \leq N^{1/2}$ have some support in the literature.

Our main result is now the following.

\begin{proposition}
  Let $K \geq 1$. Assume conjecture $\mathcal{C}_{2K}(H, N)$.
  Let $(a,q) = 1$ with $1 < q < H^{1/2}$ and $\varepsilon, \delta > 0$. Then, 
  $$
  \Big | \sum_{x \leq n \leq x + H} \Lambda(n) e \Big ( \frac{n a}{q} \Big ) \Big | \leq \varepsilon H,
  $$
  for all $x \in [0, N]$ outside a set of measure $$\ll_{\delta, K} \varepsilon^{-2K} N H^{\delta} q^{-K/2} + \varepsilon^{-2K} \cdot N^{1/2 + \delta}.$$
\end{proposition}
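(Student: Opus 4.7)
The plan is a $2K$-th moment argument conditional on $\mathcal{C}_{2K}(H, N)$. Set $S(x) := \sum_{x \leq n \leq x+H} \Lambda(n) e(na/q)$. Markov's inequality applied to $|S(x)|^{2K}$ bounds the exceptional set by $(\varepsilon H)^{-2K}\int_0^N |S(x)|^{2K}\,dx$, so the claim reduces to the moment bound
\begin{equation}\label{eq:target}
\int_0^N |S(x)|^{2K}\, dx \ll_{K,\delta} N H^{2K+\delta}\, q^{-K/2} + N^{1/2+\delta} H^{2K}.
\end{equation}
To obtain this I would expand $|S(x)|^{2K}$ as a $2K$-fold sum over $(n_1, \ldots, n_{2K})$ and swap with the integration; the compatible $x$-measure for fixed $n_i$ is $\max(0, H - (\max_i n_i - \min_i n_i))$. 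Parametrizing $n_i = n + h_i$ with $h_1 = 0$ and $|h_i|\leq H$, the phase collapses to $e(aL(h)/q)$ where $L(h) = h_1 + \cdots + h_K - h_{K+1} - \cdots - h_{2K}$, the $n$-dependent contribution cancelling because of the $K$ plus and $K$ minus signs.

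Next I would split the $h$-tuples by their pattern of coincidences. On configurations with all $h_i$ distinct, $\mathcal{C}_{2K}(H, N)$ replaces $\sum_{n \leq N} \prod_i \Lambda(n+h_i)$ by $\mathfrak{S}(\mathcal{H}_h) N + O(N^{1/2+\varepsilon})$; the accumulated Hardy--Littlewood error is $O(H^{2K} N^{1/2+\varepsilon})$, which matches the second term of \eqref{eq:target} once $\varepsilon < \delta$. For tuples with repeated $h_i$, the dominant contribution comes from perfect pairings of the $K$ plus- and $K$ minus-indices (the only pattern for which $L(h) = 0$ identically); each such pairing yields $\sum_n \prod_j \Lambda(n+h'_j)^2 \ll N (\log N)^{O_K(1)}$ for distinct $h'_j$, so the diagonal is $\ll N H^K (\log N)^{O_K(1)}$, which is absorbed into the first term of \eqref{eq:target} because $q < H^{1/2}$ forces $q^{K/2} < H^{K/4} \leq H^{K+\delta}$.

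The heart of the argument is the off-diagonal main term $N\cdot E$, where
\[
E := \sum_{\substack{h_1 = 0,\, h_2, \ldots, h_{2K}\\ \text{distinct},\ |h_i| \leq H}} \bigl(H - \max_i h_i + \min_i h_i\bigr)\, \mathfrak{S}(\{h_i\})\, e\!\Big(\frac{a L(h)}{q}\Big),
\]
which we must bound by $O(H^{2K+\delta} q^{-K/2})$. I would use the Ramanujan--Fourier expansion of the singular series to factor $\mathfrak{S}(\mathcal{H})$ over the $h_i$ up to a rapidly decaying weight indexed by moduli $d_1, d_2, \ldots$; the $h_i$-sums then decouple into incomplete exponential sums of the form $\sum_{|h|\leq H} e((a/q + b/d)h)$ times a smooth window, each bounded by $\min(H, \|a/q + b/d\|^{-1})$. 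Because $(a,q) = 1$ and $q > 1$, the combined frequency $a/q + b/d$ is quantitatively bounded away from the integers, so one gains genuine cancellation; pairing the $K$ plus-variables with the $K$ minus-variables via Cauchy--Schwarz and summing over the Ramanujan moduli (at the cost of an $H^\delta$) realizes the target $q^{-K/2}$ saving.

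The main obstacle lies precisely here: the naive Ramanujan--Fourier expansion of $\mathfrak{S}$ converges poorly when the $h_i$ cluster, and the combinatorial bookkeeping must balance the Cauchy--Schwarz pairing against the sum over moduli to extract exactly the $q^{-K/2}$ factor without losing more than $H^\delta$. All remaining steps (the moment expansion, the diagonal estimate, and the Hardy--Littlewood error accumulation) are routine.
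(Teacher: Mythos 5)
Your high-level strategy (Markov on the $2K$-th moment, expand, invoke the Hardy--Littlewood conjecture on distinct shifts, then extract cancellation from the singular series twisted by $e(aL(h)/q)$) is the same as the paper's, but there are two genuine gaps. First, your treatment of tuples with repeated $h_i$ is not correct as stated. You bound only the perfectly paired ``diagonal'' by $NH^K(\log N)^{O_K(1)}$ and discard everything else, but a configuration with, say, exactly one coincidence has $r=2K-1$ distinct values and its trivial bound (after applying Hardy--Littlewood, with the singular series of average size $O(1)$) is of order $NH^{2K-1}$, which exceeds $NH^{2K+\delta}q^{-K/2}$ as soon as $q$ is near $H^{1/2}$ and $K\geq 5$ --- and the proposition is needed for $K$ of size $10^6$. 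These intermediate configurations require the same singular-series cancellation argument, applied only to the $\ell$ indices that occur once (where the phase $u_i=\pm1$ survives), with trivial bounds on the repeated indices; the paper does this via the partition $\mathcal{D}_1\cup\cdots\cup\mathcal{D}_r$ and the inequality $\ell\geq 2r-2k$, and it is precisely these mixed configurations that produce the worst exponent $q^{-K/2}$ (the all-distinct case you analyze actually gives the stronger $q^{-K}$), so they cannot be waved away.

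Second, the step you yourself flag as the ``main obstacle'' is exactly the content of the proof and is left unresolved. The paper does not use a naive Ramanujan--Fourier decoupling plus Cauchy--Schwarz: it uses the Montgomery--Soundararajan finite expansion of $\mathfrak{S}(\mathcal{H})$ (their equation (44)) with truncation $y=H^r$, in which the $h_i$-dependence sits inside sums $A_{\mathcal{H}}(q_1,\ldots,q_r)$ whose variables $a_i$ are entangled by the condition $\sum_i a_i/q_i\in\mathbb{Z}$; the decoupling you hope for is supplied by the Montgomery--Vaughan lemma ([SoundMontgomery, Lemma 1]), whose factor $1/[q_1,\ldots,q_r]$ together with the fact that every prime dividing $[q_1,\ldots,q_r]$ divides at least two of the $q_i$ is what makes the sum over moduli converge (to $(\log H)^{O_r(1)}$, not merely $H^\delta$) while delivering a saving of $q^{-1/2}$ for each unpaired index. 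Note also that your assertion that $a/q+b/d$ is quantitatively bounded away from the integers fails when $q\mid d$: the case $q_i=q$ admits one resonant residue class contributing an extra $H^2$, and this term has to be handled separately, using $q\leq H^{1/2}$, exactly as in the paper. As written, your proposal establishes the reduction to the moment bound but not the moment bound itself.
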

In particular we see that the assumption of $\mathcal{C}_{10^6}(N^{1/50}, N)$ suffices to establish Proposition \ref{thm:EH}.

The above Proposition follows from applying Chebyschev's inequality and using the Proposition below.

\begin{proposition} \label{eq:newprop}
  Let $K \geq 1$. Assume $\mathcal{C}_{K}(H, N)$. Then, for $2k \leq K$ and $(a,q) = 1$ with $1 < q \leq H^{1/2}$, and $\varepsilon > 0$,
  $$
  \sum_{n \leq N} \Big | \sum_{n < p \leq n + H} e \Big ( \frac{p a}{q} \Big ) \Big |^{2k} \ll_{k, \varepsilon} N H^{2k + \varepsilon} q^{-k/2} + N^{1/2 + \varepsilon} H^{2k}.
  $$
\end{proposition}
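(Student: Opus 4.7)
The plan is to expand the $2k$-th moment, apply the Hardy--Littlewood conjecture to the resulting shifted prime counts, and then extract cancellation in the twisted singular-series sum via a Fourier expansion of $\mathfrak{S}$.

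First I would open the absolute value using $|z|^{2k} = z^{k}\overline{z}^{k}$ with signs $\epsilon_i = +1$ for $i \leq k$ and $\epsilon_i = -1$ for $i > k$, so that
\[
S := \sum_{n \leq N}\Bigl|\sum_{n < p \leq n+H} e(pa/q)\Bigr|^{2k} = \sum_{n \leq N}\sum_{\substack{p_1, \ldots, p_{2k} \in \mathbb{P}\\ n < p_i \leq n+H}} e\Bigl(\tfrac{a}{q}\sum_i \epsilon_i p_i\Bigr).
\]
Substituting $p_i = n + h_i$ with $h_i \in (0, H]$, the vanishing $\sum_i \epsilon_i = 0$ removes all $n$-dependence from the phase; swapping the order of summation gives
\[
S = \sum_{h_1, \ldots, h_{2k} \in (0, H]} e\Bigl(\tfrac{a}{q}\sum_i \epsilon_i h_i\Bigr)\, \#\bigl\{n \leq N : n + h_i \in \mathbb{P}\ \text{for all } i\bigr\}.
\]

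Second, I would invoke $\mathcal{C}_{2k}(H, N)$, together with its lower-order versions applied to tuples with repetitions (so that only the set $\mathcal{H}$ of distinct shifts enters), to write
\[
\#\bigl\{n \leq N : n + h_i \in \mathbb{P}\bigr\} = \mathfrak{S}(\mathcal{H}) \int_2^N \frac{dt}{(\log t)^{2k}} + O(N^{1/2+\varepsilon}).
\]
Summed trivially over $\ll H^{2k}$ tuples, the error terms contribute $O(N^{1/2+\varepsilon} H^{2k})$, which is the second term in the proposition. The leading integral is $\ll N$, so the proof reduces to the purely arithmetic estimate
\[
T := \sum_{h_1, \ldots, h_{2k} \in (0, H]} e\Bigl(\tfrac{a}{q}\sum_i \epsilon_i h_i\Bigr) \mathfrak{S}(\mathcal{H}) \ll_{k, \varepsilon} H^{2k+\varepsilon} q^{-k/2}.
\]

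Third, to bound $T$ I would use the classical multiplicative Fourier expansion of the singular series. Each local factor $(1 - \nu_p(\mathcal{H})/p)(1-1/p)^{-2k}$ differs from $1$ only through congruence coincidences among the $h_i$ modulo $p$, and may be rewritten as an additive character sum in the variables $h_i$ modulo $p$. Multiplying these local expansions yields a representation of the shape $\mathfrak{S}(\mathcal{H}) = \sum_{d \geq 1} \phi(d)^{-2k}\,\beta(d)\sum_{(b,d)=1}\prod_i e(bh_i/d)$ for a coefficient $\beta(d)$ supported on squarefree $d$ and of at most divisor-type growth. Inserting this into $T$ factorises the $2k$-fold sum over $\mathcal{H}$ into a product of one-dimensional geometric sums $\sum_{h \in (0,H]} e((b/d + \epsilon_i a/q)h)$, each bounded by $\min(H, \|b/d + \epsilon_i a/q\|^{-1})$. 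Averaging over $b \bmod d$ and then over $d$, and exploiting the pairing of variables with opposite signs to combine a $G_+^k$ and a $G_-^k$ factor, one extracts a saving of $q^{-1/2}$ per pair, for a total of $q^{-k/2}$; the sum over $d$ costs only an $H^{\varepsilon}$ factor thanks to the rapid decay of $\beta(d)/\phi(d)^{2k}$.

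The main technical hurdle is this last character-sum step: one must check that the interplay between the twist modulus $q$ and the expansion moduli $d$ (in particular the resonant residues where $b/d$ lies within $1/H$ of $\pm a/q$, and the case $(d,q) > 1$ where denominators collapse) does not destroy the square-root cancellation, and that the sign-pairing produced by $\epsilon_i = \pm 1$ is what delivers $q^{-k/2}$ rather than a weaker power. The hypothesis $q \leq H^{1/2}$ enters precisely here, to guarantee that each one-dimensional geometric sum genuinely resolves the modulus $q$ inside the range of $h$; with that in hand, combining the bound on $T$ with the $O(N^{1/2+\varepsilon}H^{2k})$ contribution from the Hardy--Littlewood error terms yields the proposition.
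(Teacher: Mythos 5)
Your overall strategy --- expand the $2k$-th moment, apply the Hardy--Littlewood conjecture, and reduce to cancellation in a twisted sum of singular series --- is the same as the paper's, but two essential steps are missing or incorrect. First, the reduction to a single sum $T$ over all $2k$-tuples is not legitimate as written: $\mathcal{C}_K(H,N)$ applies to \emph{distinct} shifts, and for a tuple whose set of distinct values has $r<2k$ elements the main term is $\mathfrak{S}(\mathcal{H})\int_1^N dt/(\log t)^{r}$, not $(\log t)^{-2k}$, so the tuples must be organized by their coincidence pattern, not lumped together. More importantly, when a $+$-shift collides with a $-$-shift the merged variable carries coefficient $0$ and contributes \emph{no} oscillation at all; the paper handles this by partitioning $\{1,\ldots,2k\}$ into blocks of equal shifts, noting the merged coefficients $u_i$ satisfy $|u_i|\le 2k$ and may vanish, and proving the counting inequality $\ell\ge 2r-2k$ for the number $\ell$ of singleton variables (those with $u_i=\pm1$). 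The actual saving is $q^{-\ell/2}$, one factor $q^{-1/2}$ per \emph{singleton} variable, converted into $q^{-k/2}$ via $r\le k+\ell/2$, $\ell\le 2k$ and $q\le H^{1/2}$; your heuristic of ``$q^{-1/2}$ per $(+,-)$ pair'' is not the mechanism and would fail precisely on the collision configurations.

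Second, the proposed expansion $\mathfrak{S}(\mathcal{H})=\sum_{d}\varphi(d)^{-2k}\beta(d)\sum_{(b,d)=1}\prod_i e(bh_i/d)$ is not a valid representation of the singular series: with one modulus and one shared residue the right-hand side depends only on $\sum_i h_i \bmod d$, whereas $\mathfrak{S}(\mathcal{H})$ depends on the differences $h_i-h_j$ (already visible for $r=2$). The correct expansion, equation (44) of Montgomery--Soundararajan as used in the paper, attaches an individual modulus $q_i$ and residue $a_i$ to each shift, coupled by the condition $\sum_i a_i/q_i\in\mathbb{Z}$, and the sum therefore does \emph{not} factor into independent one-dimensional geometric sums; decoupling it is exactly the role of the Montgomery--Vaughan-type inequality (Lemma 1 of that paper), which introduces the $1/[q_1,\ldots,q_r]$ factor and reduces matters to bounding $\sum_{a_i}\lvert G_i(a_i/q_i)\rvert^2$ --- by $\ll H^2(\log H)\,q^{-1}q_i^{1-1/\log H}$ for the $u_i=\pm1$ variables (this is where $q\le H^{1/2}$ enters) and trivially by $q_iH^2$ for the rest --- followed by summing an Euler product over the $q_i$. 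This tool, and the resulting bookkeeping, constitute the core of the proof; you explicitly leave the corresponding cancellation unverified (``the main technical hurdle''), so the argument as proposed has a genuine gap at its decisive step.
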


Thus we focus on the proof of Proposition \ref{eq:newprop}.
We rewrite the expression in Proposition \ref{eq:newprop} as
$$
\sum_{n \leq N} \Big | \sum_{h \in [1, H]} e \Big ( \frac{(n + h) a}{q} \Big ) \mathbf{1}_{n + h \in \mathbb{P}} \Big |^{2k} = \sum_{n \leq N} \Big | \sum_{h \in [1, H]} e \Big ( \frac{h a}{q} \Big ) \mathbf{1}_{n + h \in \mathbb{P}} \Big |^{2k}.
$$
Expanding the $2k$th power we get,
$$
\sum_{1 \leq h_{1}, \ldots, h_{2k} \leq H} e \Big ( \frac{a}{q} \cdot (h_{1} + \ldots + h_{k} - h_{k + 1} - \ldots - h_{2k}) \Big ) \sum_{n \leq N} \prod_{i = 1}^{2k} \mathbf{1}_{n + h_{i} \in \mathbf{P}}.
$$
Let $\mathcal{D}_{1} \cup \ldots \cup \mathcal{D}_{r}  = \{1, 2, \ldots, 2k\}$ be a decomposition of $\{1,2,\ldots, 2k\}$ into disjoint subset. We split the above sum into subsums, so that whenever $i, j \in \mathcal{D}_{u}$ we set $h_i = h_j$ and for $i \in \mathcal{D}_{u}, j \in \mathcal{D}_{v}$ with $u \neq v$ we have $h_{i} \neq h_{j}$. Having fixed a decomposition we can rewrite the resulting sum as,
$$
\sum_{\substack{1 \leq h_{1}, \ldots h_{r} \leq H \\ \text{distinct}}} e \Big ( \sum_{i = 1}^{r} u_{i} h_{i} \frac{a}{q} \Big ) \sum_{n \leq N} \prod_{i = 1}^{r} \mathbf{1}_{n + h_{i} \in \mathbb{P}}.
$$
where we assume that the indices with $i \leq \ell$ appeared only once, and thus, $u_i = \pm 1$, while indices with $\ell < i$ have repetitions and thus $|u_i| \leq 2k$. Notice that a trivial bound for the above sum is $\ll H^{r}$ we can therefore assume $r > k$. Notice also that $\ell + 2 (r - \ell) \leq 2k$. Therefore
$$
2r - 2k \leq \ell.
$$
Now that we have constrained to the case of distinct $h_{i}$ we can appeal to the Hardy-Littlewood conjecture.
We find that the above is equal to,
$$
\int_{1}^{N} \frac{dt}{(\log t)^{r}} \cdot \sum_{\substack{1 \leq h_{1}, \ldots, h_{r}\leq H \\ \text{distinct}}} e \Big ( \frac{a}{q} \sum_{i = 1}^{r} u_{i} h_{i} \Big ) \mathfrak{S}(\{h_{1}, \ldots, h_{r}\}) + O(N^{1/2 + \varepsilon} H^{2k})
$$
where $\mathcal{H} := \{h_{1}, \ldots, h_{r}\}$.
Note that we can now remove the restriction to distinct $h_{i}$ since otherwise $\mathfrak{S}(\{h_{1}, \ldots, h_{r}\})$ vanishes.
Ideally we would aim for a bound such as,
$$
\sum_{1 \leq h_{1}, \ldots, h_{r} \leq H} e \Big ( \frac{a}{q} \sum_{i = 1}^{r} u_{i} h_{i} \Big ) \mathfrak{S}(\{h_{1}, \ldots, h_{r}\}) \ll_{\varepsilon}
H^{\ell / 2 + \varepsilon + (r - \ell)}
$$
which amounts to square root cancellation in the $h_{i}$ with $i \leq \ell$ and a trivial bound on the remaining indices for which $u_i = 0$ is a possibility. Since $\ell > 2 r - 2 k$ this would yield a bound that is,
$$
\ll H^{r - r + k + \varepsilon} = H^{k + \varepsilon}.
$$
We will in fact prove a weaker bound, namely, 
$$
\ll \Big ( \frac{H}{\sqrt{q}} \Big )^{\ell} \cdot H^{r - \ell}.
$$
Since $r \leq k + \ell / 2$ this gives, 
$$
\ll \Big ( \frac{H}{\sqrt{q}} \Big )^{\ell} \cdot H^{k - \ell / 2} \leq \Big ( \frac{H}{\sqrt{q}} \Big )^{\ell / 2} \cdot H^k \leq  \frac{H^{2k}}{q^{k/2}}. 
$$
where in the final inequality we used that $\ell \leq 2k$.

By \cite[equation (44)]{SoundMontgomery} we can write,
\begin{equation} \label{eq:newrep}
\mathfrak{S}(\mathcal{H}) = \sum_{\substack{q_{1}, \ldots, q_{r} \\ p | q_{i} \implies p \leq y}} \prod_{i = 1}^{r} \frac{\mu(q_{i})}{\varphi(q_{i})} A_{\mathcal{H}}(q_{1}, \ldots, q_{r}) + O \Big ( \frac{(\log y)^{r - 2}}{y} \Big ),
\end{equation}
where
$$
A_{\mathcal{H}}(q_{1}, \ldots, q_{r}) = \sum_{\substack{a_{1}, \ldots, a_{r} \\ 1 \leq a_{i} \leq q_{i} \\ (a_{i}, q_{i}) = 1 \\ \sum a_{i} / q_{i} \in \mathbb{Z}}} e \Big ( \frac{h_{i} a_{i}}{q_{i}} \Big )
$$
We therefore pick $y = H^{r}$ making a negligible error term.

Thus using the equation \eqref{eq:newrep} it remains to bound,
$$
\sum_{\substack{q_{1}, \ldots, q_{r} \\ p | q_{i} \implies p \leq H^{r}}} \prod_{i = 1}^{r} \frac{\mu(q_{i})}{\varphi(q_{i})} \sum_{\substack{1 \leq a_{i} \leq q_{i} \\ (a_{i}, q_{i}) = 1 \\ \sum a_{i} / q_{i} \in \mathbb{Z}}} \prod_{i = 1}^{r} \mathcal{D} \Big ( \frac{a_{i}}{q_{i}} + \frac{u_{i} a}{q} \Big ),
$$
where
$$
\mathcal{D} (\alpha) := \sum_{1 \leq h \leq H} e (h \alpha).
$$
To continue we will appeal to the following variant of a result of Montgomery-Vaughan.
\begin{lemma}
  Let $r_{1}, \ldots, r_{t}$ be squarefree integers, set $r = [r_{1}, \ldots, r_{t}]$. Suppose that for any prime dividing $r$ divides at least two of the $r_{i}$. Then, for any complex valued functions $G_{1}, \ldots, G_{t}$ defined on $(0, 1]$ we have,
  $$
\Big |  \sum_{\substack{b_{1}, \ldots, b_{t} \\ 1 \leq b_{i}\leq r_{i} \\ \sum b_{i} / r_{i} \in \mathbb{Z}}} \prod_{i = 1}^{t} G_{i} \Big ( \frac{b_{i}}{r_{i}} \Big ) \Big | \leq \frac{1}{r} \prod_{i = 1}^{t} \Big ( r_{i} \sum_{b_{i} = 1}^{r_{i}} |G_{i}(b_{i} / r_{i})|^{2} \Big )^{1/2}
$$
\end{lemma}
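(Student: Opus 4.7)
I would first detect the constraint $\sum_i b_i/r_i \in \Z$ via orthogonality on $\Z/r\Z$: since $\sum b_i/r_i$ has denominator dividing $r$, we get
$$
S \;=\; \frac{1}{r} \sum_{c \bmod r} \prod_{i=1}^t F_i(c), \qquad F_i(c) := \sum_{b=1}^{r_i} G_i(b/r_i)\, e(bc/r_i),
$$
with $F_i$ being $r_i$-periodic in $c$. Two elementary bounds drive the rest of the argument: Cauchy--Schwarz on the defining sum gives $\|F_i\|_\infty \le (r_i \sum_b |G_i(b/r_i)|^2)^{1/2}$, and Parseval on $\Z/r_i\Z$ gives $\sum_{c \bmod r_i} |F_i(c)|^2 = r_i \sum_b |G_i(b/r_i)|^2$.

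Next, squarefreeness of $r$ and the $r_i$ lets me apply CRT to write $c = (c_p)_{p \mid r}$, with $F_i(c)$ depending on $c_p$ only when $p \mid r_i$, i.e.\ $i \in S_p := \{j : p \mid r_j\}$. I would process the primes $p \mid r$ one at a time, in a suitably chosen order. At each prime $p$ I would choose two still-alive indices $i_p, j_p \in S_p$ --- this is precisely where the hypothesis $|S_p| \ge 2$ intervenes --- bound the other members of $S_p$ pointwise by $\|F_i\|_\infty$, and apply Cauchy--Schwarz in $c_p$ to the remaining pair:
$$
\sum_{c_p \bmod p} |F_{i_p}(c) F_{j_p}(c)| \;\le\; \Bigl(\sum_{c_p} |F_{i_p}(c)|^2\Bigr)^{1/2}\Bigl(\sum_{c_p} |F_{j_p}(c)|^2\Bigr)^{1/2}.
$$
This replaces $F_i$ for $i \in S_p \setminus \{i_p, j_p\}$ by the constant $\|F_i\|_\infty$ and replaces $F_{i_p}, F_{j_p}$ by their partial $L^2$-norms in $c_p$, which remain functions of the unprocessed $c_q$ and are thus amenable to further Cauchy--Schwarz.

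After all primes have been processed, the inequality takes the form $|S| \le r^{-1} \prod_i M_i$, where for each $i$ the factor $M_i$ is either $\|F_i\|_\infty$ (if $i$ was sup-normed at some prime) or an iterated partial $L^2$-norm of $F_i$ (if $i$ was paired at every prime of $P_i := \{p : p \mid r_i\}$). In the first case the bound $M_i \le (r_i \sum_b |G_i|^2)^{1/2}$ is immediate; in the second, repeatedly applying $\max \le \|\cdot\|_2$ in the remaining coordinates and Parseval over $\Z/r_i\Z$ gives the same estimate. Multiplying the contributions and dividing by $r$ yields the claimed inequality.

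The main obstacle is the scheduling in the iterative step: at every prime $p$ we must still have at least two alive indices in $S_p$ when $p$ is processed. A greedy argument --- e.g.\ process primes with smallest $|S_p|$ first, and when forced to sup-norm pick the indices with the least remaining demand across unprocessed primes --- ensures this is always possible under the hypothesis $|S_p| \ge 2$; the squarefreeness of $r$ keeps the bookkeeping clean by guaranteeing that each prime contributes an independent CRT coordinate.
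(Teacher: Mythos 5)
The paper does not actually prove this lemma: it is quoted from Montgomery--Soundararajan (Lemma 1 of \cite{SoundMontgomery}), where it is established by induction on the primes dividing $r$ --- one splits each $b_i/r_i$ by CRT into its $p$-part and the rest, applies the induction hypothesis with the $p$-parts frozen, and treats the single prime $p$ by exactly the Cauchy--Schwarz/Parseval mechanics you use. Your one-shot variant (orthogonality over $\Z/r\Z$ to detect $\sum_i b_i/r_i\in\Z$, CRT in the dual variable $c$, then prime-by-prime estimation) is the same circle of ideas, and its analytic ingredients are all correct: $\|F_i\|_\infty\le (r_i\sum_b|G_i(b/r_i)|^2)^{1/2}$, Parseval $\sum_{c\bmod r_i}|F_i(c)|^2=r_i\sum_b|G_i(b/r_i)|^2$, and the observation that the $c_p$-sum must be absorbed by pairing (at least) two factors, which is exactly where the hypothesis that every prime divides at least two of the $r_i$ enters.

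The one step you have not justified is the scheduling claim: the assertion that a greedy order always leaves at least two ``alive'' indices in $S_p$ when $p$ is processed is stated without proof, and it is the only point where the write-up could fail. Note, however, that the scheduling problem is entirely an artifact of bounding non-paired factors by the \emph{global} sup $\|F_i\|_\infty$, which erases their dependence on the unprocessed coordinates. Two small modifications remove it. Either take the sup only over $c_p$, so the factor stays a function of the remaining coordinates and no index ever dies; the resulting iterated mixed $\sup/\ell^2$ norm is still bounded by the full $\ell^2$ norm over $c\bmod r_i$, hence by $(r_i\sum_b|G_i|^2)^{1/2}$ via Parseval. Or, cleaner, at each prime apply generalized H\"older in $c_p$ to \emph{all} $k_p=|S_p|$ current nonnegative factors $H_i$ (the iterated norms of $|F_i|$): $\sum_{c_p}\prod_{i\in S_p}H_i\le\prod_{i\in S_p}\bigl(\sum_{c_p}H_i^{k_p}\bigr)^{1/k_p}\le\prod_{i\in S_p}\bigl(\sum_{c_p}H_i^{2}\bigr)^{1/2}$, using $k_p\ge 2$ and the monotonicity $\|\cdot\|_{\ell^{k}}\le\|\cdot\|_{\ell^{2}}$ for counting measure. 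Then every index simply accumulates an $\ell^2$-average in each of its coordinates, the order of the primes is irrelevant, and Fubini plus Parseval give $M_i\le (r_i\sum_b|G_i(b/r_i)|^2)^{1/2}$, completing your argument with no combinatorial bookkeeping at all.
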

\begin{proof}
See \cite[Lemma 1]{SoundMontgomery}
\end{proof}

Notice that if $(b_{i}, r_{i}) = 1$ and
$$
\sum_{i = 1}^{t} \frac{b_{i}}{r_{i}} \in \mathbb{Z}
$$
then it is not possible for a prime $p$ to divide exactly one $r_{i}$, this is readily seen by clearing denominators.

Thus, we can apply the previous Lemma with the choice
$$
G_{i}(\alpha) := \mathbf{1}_{\alpha \in \mathcal{S}_{q_{i}}} \sum_{1 \leq h \leq H} e \Big ( h \alpha + \frac{h u_{i} a}{q} \Big )
  $$
  and
  $$
  \mathcal{S}_{q_{i}} := \Big \{ \frac{a}{q_{i}} : (a, q_{i}) = 1 \Big \}.
  $$
  Notice that $G_{i}$ is allowed to depend on $q_{i}$ which allows us to introduce the condition $\alpha \in \mathcal{S}_{q_i}$. Moreover when $\alpha = a_{i} / q_{i}$ the condition $a_{i} / q_{i} \in \mathcal{S}_{q_{i}}$ simply becomes the requirement that $(a_{i}, q_{i}) = 1$.

 We now come to bounding, 
  $$
  \sum_{1 \leq a_{i} \leq q_{i}} \Big | G_{i} \Big ( \frac{a_{i}}{q_{i}} \Big ) \Big |^{2}.
  $$
If $i > \ell$ we could have $u_i = 0$. Therefore for $i > \ell$ the best bound we can hope for is the trivial one, that is, 
 $$
  \sum_{1 \leq a_{i} \leq q_{i}} \Big | G_{i} \Big ( \frac{a_{i}}{q_{i}} \Big ) \Big |^{2} \leq q_i H^2.
  $$
For $i \leq \ell$ we have,
  $$
  \Big | G_{i} \Big ( \frac{a_{i}}{q_{i}} \Big ) \Big | \ll \min(\| a_{i} / q_{i} + a / q \|^{-1} , H )
  $$
  For $q_i \neq q$ this yields the bound,
  \begin{align*}
  \sum_{a_{i} = 1}^{q_{i}} \Big | G_{i} \Big ( \frac{a_{i}}{q_{i}} \Big ) \Big |^{2} & \ll H \sum_{a_i = 1}^{q_i} \Big | G_i \Big ( \frac{a_i}{q_i} \Big ) \Big | \\ & \ll H q q_i \log q_i \leq H \log H \cdot  q \cdot q_i^{1 - 1 / \log H}.
  \end{align*}
  Notice that in the above sum we can add the redundant condition $(a_i, q_i) = 1$, by definition of $G_i$. In the case $q_{i} = q$ one residue class could matches $a / q$. And therefore the final bound including this case, is,
  $$
 \sum_{a_i = 1}^{q_i} \Big | G_i \Big ( \frac{a_i}{q_i} \Big ) \Big |^{2} \ll H^{2} \mathbf{1}_{q = q_i} + H \log H \cdot q \cdot q_i^{1 - 1 / \log H}.
  $$
Since $q \leq H^{1/2}$ by assumptions, the above is, 
$$
\ll \frac{H^2 \log H}{q} \cdot q_i^{1 - 1 / \log H}.
$$
 It follows that, 
  \begin{align*}
\Big | & \sum_{\substack{1 \leq a_{i} \leq q_{i} \\ (a_{i}, q_{i}) = 1 \\ \sum a_{i} / q_{i} \in \mathbb{Z}}} \prod_{i = 1}^{r} \mathcal{D} \Big ( \frac{a_{i}}{q_{i}} + \frac{u_{i} a}{q} \Big ) \Big | \\ & \ll \frac{1}{[q_1, \ldots, q_r]} \prod_{i = 1}^{\ell} \Big ( \frac{H^2 \log H}{q}\cdot  q_i^{2 - 1 / \log H} \Big )^{1/2} \prod_{i = \ell + 1}^{r} \Big ( q_i^2 H^2 \Big )^{1/2}
  \end{align*}
We now sum this bound over $q_1, \ldots, q_r$ with weights $\varphi(q_i)^{-1}$ and over $q_i$ such that $p | q_i \implies p \leq H^r$. This yields, 
\begin{align*}
(H \log H)^r q^{-\ell / 2} \sum_{\substack{p | q_i \implies p \leq H^r}} \frac{q_1^{1 - 1 / \log H} \ldots q_r^{1 - 1 / \log H}}{[q_1, \ldots, q_r]} \cdot \frac{1}{\varphi(q_1) \ldots \varphi(q_r)}
\end{align*}
The sum over $q_1, \ldots, q_r$ is now an Euler product and bounded by $\ll (\log H)^{r^2}$. We conclude with the bound, 
$$
H^r (\log H)^{O_r(1)} q^{-\ell / 2},
$$
which is sufficient for our purposes.

\section{Equidistribution of periodic orbits at quadratic times}
\begin{proposition}
    Let $\phi$ be a smooth $\Gamma_0(1)$-invariant function on $\mathbb{H}$ (and compactly supported).
We have, 
$$
\sum_{n < q} \phi \Big ( \frac{n^2}{q} + \frac{i}{q} \Big ) \rightarrow \frac{1}{\text{Vol}(\mathcal{F})}\int_{\mathcal{F}} \phi(z) d \mu(z)
$$
as $q \rightarrow \infty$ along prime numbers congruent to $1$ modulo $4$, and where $\mathcal{F}$ is the fundamental domain. 
\end{proposition}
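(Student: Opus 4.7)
The plan is to follow the strategy sketched in the introduction by using the spectral theory of automorphic forms on $\Gamma\backslash\mathbb{H}$ with $\Gamma = PSL(2,\mathbb{Z})$. I decompose $\phi$ into its spectral pieces,
$$
\phi(z) = \frac{1}{\mathrm{Vol}(\mathcal{F})}\int_{\mathcal{F}}\phi\,d\mu + \sum_j \langle \phi, f_j\rangle f_j(z) + \frac{1}{4\pi}\int_{-\infty}^{\infty}\langle\phi, E(\cdot,\tfrac12+it)\rangle E(z,\tfrac12+it)\,dt,
$$
where the $f_j$ range over an orthonormal basis of Hecke--Maass cusp forms. The constant part, summed against the $q$ masses $n^2/q+i/q$ with $n<q$, yields $q$ times the claimed main term. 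Smoothness of $\phi$ guarantees that the coefficients $\langle\phi,f_j\rangle$ and $\langle\phi,E(\cdot,\tfrac12+it)\rangle$ decay faster than any polynomial in the spectral parameter, so it suffices to show that $\sum_{n<q} f(n^2/q+i/q) = o(q)$ uniformly (with polynomial loss permitted in the spectral parameter) for each cusp form $f$ and each Eisenstein series.

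For a cusp form $f$ with spectral parameter $t_f$, expand at infinity
$$
f(x+iy) = \sqrt{y}\sum_{m\neq 0}\lambda_f(m)K_{it_f}(2\pi|m|y)e(mx),
$$
substitute $z=n^2/q+i/q$, and swap the order of summation. The inner sum over $n$ is the classical quadratic Gauss sum $\sum_{n<q} e(mn^2/q)$. Because $q$ is an odd prime with $q\equiv 1\pmod 4$, this evaluates to $\bigl(\tfrac{m}{q}\bigr)\sqrt{q}$ when $(m,q)=1$, while the $q\mid m$ contributions are harmless thanks to the exponential decay of $K_{it_f}(2\pi|m|/q)$ once $|m|\geq q$. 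Writing $\chi_q(m)=\bigl(\tfrac{m}{q}\bigr)$, the problem reduces to bounding
$$
S_f(q) := \sum_{m\neq 0}\lambda_f(m)\chi_q(m)K_{it_f}\bigl(2\pi|m|/q\bigr) = o(q).
$$

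I bound $S_f(q)$ via Mellin inversion of the Bessel function, $K_{it_f}(2\pi|m|/q) = \frac{1}{2\pi i}\int_{(\sigma)}\widehat K_{t_f}(s)(q/|m|)^s\,ds$, which turns the $m$-sum into a contour integral of the twisted Dirichlet $L$-function $L(s,f\otimes\chi_q)$. Shifting the contour from large $\sigma$ to $\Re s=\tfrac12$ encounters no poles (since $f$ is cuspidal), and Iwaniec's subconvex bound $L(\tfrac12+it,f\otimes\chi_q)\ll_{f,t,\varepsilon}q^{1/2-\delta+\varepsilon}$ then yields $S_f(q)\ll q^{1-\delta'}=o(q)$. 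The Eisenstein spectrum is handled identically: the associated $L$-function factors as $L(s+it,\chi_q)L(s-it,\chi_q)$, for which Burgess' classical estimate $L(\tfrac12+iu,\chi_q)\ll q^{3/16+\varepsilon}$ produces the needed cancellation. The main obstacle is uniformity in the spectral parameter: both Iwaniec's and Burgess' bounds carry polynomial growth in the archimedean parameter $t_f$, but the Sobolev decay of the spectral coefficients of the smooth function $\phi$ more than compensates, so the full spectral sum converges with the required decay. A secondary, essentially cosmetic, issue is the tail $q\mid m$ in the Gauss sum evaluation, absorbed into an $O_A(q^{-A})$ error by the exponential decay of $K_{it_f}$.
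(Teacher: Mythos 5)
Your argument is correct and follows essentially the same route as the paper: spectral decomposition with rapid (Sobolev) decay of the coefficients, evaluation of the quadratic Gauss sum as $\chi_q(m)\sqrt{q}$ for prime $q\equiv 1\pmod 4$, Mellin inversion of the Bessel transform to reach $L(s,f\otimes\chi_q)$ resp.\ $L(s+it,\chi_q)L(s-it,\chi_q)$ on the critical line, and the subconvex bounds of Iwaniec and Burgess. The only detail the paper spells out that you elide is the constant term $y^{1/2\pm it}$ of the Eisenstein series, which contributes only $O(\sqrt{q})$; conversely, your explicit handling of the $q\mid m$ terms in the Gauss sum is, if anything, more careful than the paper's.
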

We make three remarks before proceeding to the proof. 
\begin{enumerate}
\item The restriction on $q$ to be prime is unnecessary and it's possible to prove a more general result asserting that, 
$$
\sum_{n < q} \phi \Big ( x + \frac{n^2}{q} + \frac{i}{q} \Big ) \rightarrow \frac{1}{\text{Vol}(\mathcal{F})} \int_{\mathcal{F}} \phi(z) d \mu(z)
$$
as $q \rightarrow \infty$ for any fixed $x \in \mathbb{R}$. The main difference is that we then need subconvexity for the additively twisted $L$-function, 
$$
\sum_{n \geq 1} \frac{\lambda_j(n)\chi_q(n) e(n x)}{n^s}
$$
which follows from the use of a circle method, but we couldn't locate an immediate reference, and where $\chi_q$ is the Kronecker symbol modulo $q$. 
\item Our proof gives in fact a rate of convergence that is $\ll q^{-\delta}$. 
\item Assuming the Generalized Riemann Hypothesis one can obtain a rate of convergence that is $\ll q^{-1/2 + \varepsilon}$ one could also diminish the length of the average over $n$ and shrink it to $n < q^{1/2 + \varepsilon}$.   
\end{enumerate}

\begin{proof}
We start by using the spectral expansion. We have, 
$$\phi(z) = \langle \phi, 1 \rangle + \sum_{j} \langle \phi, u_j \rangle u_j(z) + \int_{\mathbb{R}} \langle \phi, E(\cdot, \tfrac 12 + it) \rangle E(z, \tfrac 12 + it) dt $$
with convergence being uniform and absolute for all $z$ in any given compact set and where $u_j$ are eigenfunctions of the hyperbolic Laplacian in the discrete spectrum and with eigenvalues $\tfrac 14 + t_j^2$ and $E(z, \tfrac 12 + it)$ are in the continuous spectrum with eigenvalues $\tfrac 14 + t^2$. 
By integration by parts we have, 
$$
\langle \phi, u_j \rangle \ll_{A} (\tfrac 14 + t_j^2)^{-A}
$$
for any $A > 10$ and where $\tfrac 14 + t_j^2$ is the eigenvalue of the eigenfunction $u_j$. Similarly, 
$$
\langle \phi, E(\cdot, \tfrac 12 + it) \rangle \ll_{A} (\tfrac 14 + t^2)^{-A}
$$
for any given $A > 10$. Therefore we can truncate both the sum and the integral at $|t_j| \leq q^{\varepsilon}$ and $|t| \leq q^{\varepsilon}$ respectively, making an error that is $\ll_{A, \varepsilon} q^{-A}$.

Therefore it remains to show that there exists a $\delta > 0$ such that, 
$$
\sum_{n < q} u_j\Big ( \frac{n^2}{q} + \frac{i}{q} \Big ) \ll q^{1 - \delta}
$$
and $$
\sum_{n < q} E \Big ( \frac{n^2}{q} + \frac{i}{q} \Big ) \ll q^{1 - \delta}.
$$
Both estimates are in fact similar. 
\subsubsection{The contribution of the cuspidal spectrum}
We recall that the Fourier expansion of a cusp form of eigenvalue $\tfrac 14 + t_j^2$, normalized so that $\| u_j \| = 1$ is given as, 
$$
u_j(z) = c(t_j) \sqrt{y} \sum_{n \geq 1} \lambda_j(n)\kappa_{i t_j}(2\pi |n| y) \text{SC}(n x)
$$
where $\text{SC}(x)$ is either the $\sin$ or $\cos$ function and
$\kappa_{i t_j} = e^{\frac{\pi}{2} t_j} K_{i t_j}(x)$ where $K(\cdot)$ is the $K$-Bessel function, and finally $c(t_j) = t_j^{o(1)}$

Upon averaging we thus find, 
$$
\sum_{n < q} u_j \Big ( \frac{n^2}{q} + \frac{i}{q} \Big ) = c(t_j) \cdot \frac{1}{\sqrt{q}} \sum_{n \geq 1} \lambda_j(n) \kappa_{i t_j} \Big ( \frac{2\pi n}{q} \Big ) \Big ( \sum_{x \pmod{q}} \text{SC} \Big (\frac{2\pi n x^2}{q} \Big ) \Big ).
$$
We notice that the exponential sum is zero if $\text{SC} = \sin$ and evaluates to a standard Gauss sum when $\text{SC} = \cos$. 
The Gauss sum now evaluates, 
$$
\sum_{x \pmod{q}} \cos \Big ( \frac{2\pi n x^2}{q} \Big ) = \sqrt{q} \Big ( \frac{n}{q} \Big )
$$
Furthermore since $q$ is a prime congruent to $1$ modulo $4$ we have, 
$$
\Big ( \frac{n}{q} \Big ) = \chi_{q}(n)
$$
where $\chi_q$ is the Kronecker symbol modulo $q$. 
We recall that, for $\Re s > 0$, 
$$
\int_{0}^{\infty} \kappa_{i t_j}(x) x^{s - 1} dx = 2^{s - 2} e^{\frac{\pi}{2} t_j} \Gamma \Big ( \frac{s - i t_j}{2} \Big ) \Gamma \Big ( \frac{s + i t_j}{2} \Big ). 
$$
Therefore, by Mellin inversion, 
\begin{align*}
\sum_{n \geq 1} & \lambda_j(n) \kappa_{i t_j} \Big ( \frac{2\pi |n|}{q} \Big ) \chi_q(n) \\ & = \frac{1}{8\pi i} \int_{1/2 - i \infty}^{1/2 + i \infty} (4\pi q)^{s} L(s, u_j \otimes \chi_q) e^{\frac{\pi}{2} t_j} \Gamma \Big ( \frac{s + i t_j}{2} \Big ) \Gamma \Big ( \frac{s - i t_j}{2} \Big ) ds.  
\end{align*}
We now appeal to a subconvexity bound originally due to Iwaniec (see \cite{Young} for the best current result), 
$$
L(\tfrac 12 + it, u_j \otimes \chi_q) \ll (t_j q)^{1/2 - \delta}
$$
to conclude that the above is $\ll q^{1 - \delta + o(1)}$ as needed.

\subsubsection{The contribution of the Eisenstein spectrum}

This is very much similar to the contribution of the cuspidal spectrum. We therefore only highlight the main difference. We need to show that, 
$$
\sum_{n < q} E \Big ( \frac{n^2}{q} + \frac{i}{q}, \frac{1}{2} + it \Big ) \ll q^{1 - \delta} 
$$
for some $\delta > 0$ and $|t| \leq q^{\varepsilon}$. We use the Fourier expansion of an Eisenstein series, 
\begin{align*}
E(z, \tfrac 12 + it) = y^{\tfrac 12 + it} & + \frac{\theta(\tfrac 12 + it)}{\theta(\tfrac 12 - it)} y^{1/2 - it} \\ & + \frac{4 \sqrt{y}}{\theta(\tfrac 12 + it)} \sum_{n > 0} \eta_{it}(n) K_{it}(2\pi n y) \cos(2\pi n x)
\end{align*}
where $\theta(s) = \pi^{-s}\Gamma(s)\zeta(2s)$
and 
$$
\eta_{it}(n) = \sum_{a b = n} \Big ( \frac{a}{b} \Big )^{it}.
$$
The contribution of the terms $y^{1/2 \pm it}$ is negligible and gives a bound of $\sqrt{q}$. 
Proceeding as before we are led to consider,
$$
\frac{1}{2\pi i} \int_{1/2 - i \infty}^{1/2 + i \infty} (2\pi q)^{s} L(s + it, \chi_q) L(s - it, \chi_q) \Gamma \Big ( \frac{s - i t}{2} \Big ) \Gamma \Big ( \frac{s + it}{2} \Big ) ds 
$$
and this is again $\ll q^{1 - \delta}$ by using the subconvex bound \cite{Young}, 
$$
L(\tfrac 12 + it, \chi_q) \ll ((1 + |t|) q)^{1/4 - \delta}.
$$

\end{proof}

\end{document}